\newcommand{\nc}{\newcommand}
\nc{\eg}{\mathfrak{e} } \nc{\fg}{\mathfrak{f} } \nc{\vg}{\mathfrak{v} } \nc{\wg}{\mathfrak{w} }
\nc{\zg}{\mathfrak{z} } \nc{\ngo}{\mathfrak{n} } \nc{\kg}{\mathfrak{k} }
\nc{\mg}{\mathfrak{m} } \nc{\bg}{\mathfrak{b} } \nc{\ggo}{\mathfrak{g} }
\nc{\ggob}{\overline{\mathfrak{g}} } \nc{\sog}{\mathfrak{so} }
\nc{\sug}{\mathfrak{su} } \nc{\spg}{\mathfrak{sp} } \nc{\slg}{\mathfrak{sl} }
\nc{\glg}{\mathfrak{gl} } \nc{\cg}{\mathfrak{c} } \nc{\rg}{\mathfrak{r} }
\nc{\hg}{\mathfrak{h} } \nc{\tg}{\mathfrak{t} } \nc{\ug}{\mathfrak{u} }
\nc{\dg}{\mathfrak{d} } \nc{\ag}{\mathfrak{a} } \nc{\pg}{\mathfrak{p} }
\nc{\sg}{\mathfrak{s} } \nc{\affg}{\mathfrak{aff} }
\nc{\pca}{\mathcal{P}} \nc{\nca}{\mathcal{N}} \nc{\lca}{\mathcal{L}}
\nc{\oca}{\mathcal{O}} \nc{\mca}{\mathcal{M}} \nc{\tca}{\mathcal{T}}
\nc{\aca}{\mathcal{A}} \nc{\cca}{\mathcal{C}} \nc{\gca}{\mathcal{G}}
\nc{\sca}{\mathcal{S}} \nc{\hca}{\mathcal{H}} \nc{\bca}{\mathcal{B}}
\nc{\dca}{\mathcal{D}} \nc{\val}{\operatorname{val}}
\nc{\vp}{\varphi} \nc{\ddt}{\tfrac{{\rm d}}{{\rm d}t}} \nc{\dds}{\tfrac{{\rm d}}{{\rm d}s}}
\nc{\dpar}{\tfrac{\partial}{\partial t}} \nc{\im}{\mathtt{i}}
\renewcommand{\Im}{{\rm Im}}
\nc{\SO}{\mathrm{SO}} \nc{\Spe}{\mathrm{Sp}} \nc{\Sl}{\mathrm{SL}}
\nc{\SU}{\mathrm{SU}} \nc{\Or}{\mathrm{O}} \nc{\U}{\mathrm{U}} \nc{\Gl}{\mathrm{GL}}
\nc{\Se}{\mathrm{S}} \nc{\Cl}{\mathrm{Cl}} \nc{\Spein}{\mathrm{Spin}}
\nc{\Pin}{\mathrm{Pin}} \nc{\G}{\mathrm{GL}_n(\RR)} \nc{\g}{\mathfrak{gl}_n(\RR)}
\nc{\RR}{{\Bbb R}} \nc{\HH}{{\Bbb H}} \nc{\CC}{{\Bbb C}} \nc{\ZZ}{{\Bbb Z}}
\nc{\FF}{{\Bbb F}} \nc{\NN}{{\Bbb N}} \nc{\QQ}{{\Bbb Q}} \nc{\PP}{{\Bbb P}}
\nc{\vs}{\vspace{.2cm}} \nc{\vsp}{\vspace{1cm}} \nc{\ip}{\langle\cdot,\cdot\rangle}
\nc{\ipp}{(\cdot,\cdot)} \nc{\la}{\langle} \nc{\ra}{\rangle} \nc{\unm}{\tfrac{1}{2}}
\nc{\unc}{\tfrac{1}{4}} \nc{\und}{\tfrac{1}{16}} \nc{\no}{\vs\noindent}
\nc{\lamkn}{\Lambda^2(\RR^{q+n})^*\otimes\RR^{q+n}} \nc{\lamn}{\Lambda^2(\RR^n)^*\otimes\RR^n} \nc{\lamp}{\Lambda^2\pg^*\otimes\pg}
\nc{\lamg}{\Lambda^2\ggo^*\otimes\ggo} \nc{\lamngo}{\Lambda^2\ngo^*\otimes\ngo}
\nc{\tangz}{{\rm T}^{\rm Zar}} \nc{\mum}{/\!\!/} \nc{\kir}{/\!\!/\!\!/}
\nc{\Ri}{\tfrac{4\Ric_{\mu}}{||\mu||^2}} \nc{\ds}{\displaystyle}
\nc{\ben}{\begin{enumerate}} \nc{\een}{\end{enumerate}} \nc{\f}{\frac}
\nc{\lb}{[\cdot,\cdot]} \nc{\isn}{\tfrac{1}{||v||^2}}
\nc{\gkp}{(\ggo=\kg\oplus\pg,\ip)} \nc{\ukh}{(\ug=\kg\oplus\hg,\ip)}
\nc{\Hess}{\operatorname{Hess}} \nc{\ad}{\operatorname{ad}}
\nc{\Ad}{\operatorname{Ad}} \nc{\rank}{\operatorname{rank}}
\nc{\Irr}{\operatorname{Irr}} \nc{\End}{\operatorname{End}}
\nc{\Aut}{\operatorname{Aut}} \nc{\Inn}{\operatorname{Inn}}
\nc{\Der}{\operatorname{Der}} \nc{\Ker}{\operatorname{Ker}}
\nc{\Iso}{\operatorname{I}} \nc{\Diff}{\operatorname{Diff}}
\nc{\Lie}{\operatorname{Lie}} \nc{\tr}{\operatorname{tr}} \nc{\dif}{\operatorname{d}}
\nc{\sen}{\operatorname{sen}} \nc{\modu}{\operatorname{mod}}
\nc{\Riem}{\operatorname{Rm}} \nc{\Ricci}{\operatorname{Ric}}
\nc{\sym}{\operatorname{sym}} \nc{\symac}{\operatorname{sym^{ac}}}
\nc{\symc}{\operatorname{sym^{c}}} \nc{\scalar}{\operatorname{R}}
\nc{\grad}{\operatorname{grad}} \nc{\ricci}{\operatorname{Rc}}
\nc{\nr}{\operatorname{nr}} \nc{\riccic}{\operatorname{ric^{c}}}
\nc{\riccig}{\operatorname{ric^{\gamma}}} \nc{\Rin}{\operatorname{M}}
\nc{\Le}{\operatorname{L}} \nc{\tang}{\operatorname{T}}
\nc{\level}{\operatorname{level}} \nc{\rad}{\operatorname{r}}
\nc{\abel}{\operatorname{ab}} \nc{\CH}{\operatorname{CH}}
\nc{\mcc}{\operatorname{mcc}} \nc{\Adj}{\operatorname{Adj}}
\nc{\Order}{\operatorname{O}} \nc{\mm}{\operatorname{M}}
\nc{\inj}{\operatorname{inj}} \nc{\proy}{\operatorname{pr}}
\theoremstyle{plain}
\newtheorem{theorem}{Theorem}[section]
\newtheorem{proposition}[theorem]{Proposition}
\newtheorem{corollary}[theorem]{Corollary}
\newtheorem{lemma}[theorem]{Lemma}
\theoremstyle{definition}
\newtheorem{definition}[theorem]{Definition}
\theoremstyle{remark}
\newtheorem{remark}[theorem]{Remark}
\newtheorem{example}[theorem]{Example}
\title{Convergence of homogeneous manifolds}
\author{Jorge Lauret}
\address{FaMAF and CIEM, Universidad Nacional de C\'ordoba, C\'ordoba, Argentina}
\email{lauret@famaf.unc.edu.ar}
\thanks{This research was partially supported by grants from CONICET (Argentina)
and SeCyT (Universidad Nacional de C\'ordoba)}
\begin{document}

\maketitle

\begin{abstract}
We study in this paper three natural notions of convergence of homogeneous manifolds, namely infinitesimal, local and pointed, and their relationship with a fourth one, which only takes into account the underlying algebraic structure of the homogeneous manifold and is indeed much more tractable.  Along the way, we introduce a subset of the variety of Lie algebras which parameterizes the space of all $n$-dimensional simply connected homogeneous spaces with $q$-dimensional isotropy, providing a framework which is very advantageous to approach variational problems for curvature functionals as well as geometric evolution equations on homogeneous manifolds.
\end{abstract}

\tableofcontents


\section{Introduction}

It is often complicated to write rigorous proofs in convergence theory of Riemannian manifolds.  In the homogeneous case, however, it is natural to expect that an `algebraic' notion of convergence may help.  With this aim in mind, we study in this paper three natural notions of convergence of homogeneous manifolds, namely infinitesimal, local and pointed, and their relationship with a fourth one, which only takes into account the underlying algebraic structure of the homogeneous manifold and is indeed much more tractable.  Along the way, we introduce a set $\hca_{q,n}$ of $(q+n)$-dimensional Lie algebras which parameterizes the space of all $n$-dimensional simply connected homogeneous spaces with $q$-dimensional isotropy, providing a framework which is very advantageous to approach variational problems for curvature functionals as well as geometric evolution equations on homogeneous manifolds.

\subsection{Convergence} In order to define convergence of a sequence $(M_k,g_k)$ of homogeneous manifolds to a homogeneous manifold $(M,g)$, it is customary to start by requiring the existence of a sequence $\Omega_k\subset M$ of open neighborhoods of a basepoint $p\in M$ together with embeddings $\phi_k:\Omega_k\longrightarrow M_k$ such that $\phi_k^*g_k\to g$ smoothly as $k\to\infty$ (in particular, all manifolds are of a given dimension $n$).  The size of the $\Omega_k$'s will make the difference, and according to some possible behaviors, one gets the following notions of convergence in increasing degree of strength:

\begin{itemize}
\item {\it infinitesimal}: no condition on $\Omega_k$, it may even happen that $\bigcap\Omega_k=\{ p\}$ (i.e. only the germs of the metrics at $p$ are involved).  Nevertheless, the sequence $(M_k,g_k)$ has necessarily bounded geometry by homogeneity.

\item {\it local}: there is a nonempty open subset $\Omega\subset\Omega_k$ for all sufficiently large $k$.  A positive lower bound for the injectivity radii therefore holds.

\item {\it pointed or Cheeger-Gromov}: $\Omega_k$ eventually contains any compact subset of $M$.  Here topology issues come in.
\end{itemize}

On the other hand, we know that homogeneous manifolds have a prominent `algebraic' side, and the point is to what extent this is related to the above notions of convergence.  Let us consider for each $(M_k,g_k)$ a presentation $G_k/K_k$ as a homogeneous space endowed with a $G_k$-invariant metric $g_k$.  By just requiring $\dim{G_k}$ to be constant on $k$, we can assume that a fixed vector space decomposition $\ggo=\kg\oplus\pg$ gives the reductive decomposition for all $G_k/K_k$ and a fixed inner product $\ip$ on $\pg$ is the value of $g_k$ at the origin $eK_k$ for all $k$.
In this way, the only algebraic data which vary are the sequence of Lie brackets $\mu_k$ on the vector space $\ggo$ such that $(\ggo,\mu_k)$ is the Lie algebra of $G_k$.  Thus, a fourth notion of convergence for homogeneous manifolds comes into play: the standard convergence of brackets $\mu_k\to\lambda$ as vectors in $\Lambda^2\ggo^*\otimes\ggo$, where $\lambda$ is the corresponding Lie bracket for the limit $(M,g)=G/K$.

Our main results can be described as follows.  We show that the convergence of brackets $\mu_k\to\lambda$ is essentially equivalent to the infinitesimal convergence of homogeneous manifolds $(G_k/K_k,g_k)\to (G/K,g)$ (cf. \ref{convmu2}), and secondly, that in order to get the stronger local convergence, it is  sufficient to have a positive lower bound for the Lie injectivity radii of the sequence (cf. \ref{convmu}).  The {\it Lie injectivity radius} of a homogeneous space $(G/K,g)$ is the largest $r>0$ such that its canonical coordinates $\pi\circ\exp$ are defined on the euclidean ball of radius $r$ in $(\pg,\ip)$ (cf. \ref{lieinj}).  Notice that local convergence implies pointed subconvergence of $(M_k,g_k)$ to a homogeneous manifold locally isometric to $(M,g)$, by the compactness theorem (cf. \ref{pcomp}), but we show that such limits may topologically vary for different subsequences.

It is important to note that for left-invariant metrics on Lie groups (i.e. $K_k=\{ e\}$ for all k), the positive lower bound for the Lie injectivity radii follows at once from the convergence $\mu_k\to\lambda$ (cf. \ref{lieinjLG}), giving rise to stronger results in this case (cf. \ref{convmu4}).

\subsection{The space of homogeneous manifolds}  Recall that the data $(\ggo=\kg\oplus\pg,\ip)$ of a homogeneous space can be canonically fixed at the level of inner product vector spaces.  This motivates us to consider the set $\hca_{q,n}\subset \Lambda^2\ggo^*\otimes\ggo$, where $q:=\dim{\kg}$, $n:=\dim{\pg}$, of those Lie brackets satisfying the technical conditions (cf. (h1)-(h4) in Section \ref{varhs}) which allow us to define a simply connected homogeneous space $(G_\mu/K_\mu,g_\mu)$ attached to each $\mu\in\hca_{q,n}$ with $\Lie(G_\mu)=(\ggo,\mu)$, $\Lie(K_\mu)=(\kg,\mu|_{\kg\times\kg})$ and $g_\mu(eK_\mu)=\ip$.  The set $\hca_{q,n}$ therefore parameterizes the space of all $n$-dimensional simply connected homogeneous spaces with $q$-dimensional isotropy.

This approach, that varies Lie brackets rather than metrics has been used for decades, though only in the case of left-invariant metrics on Lie groups (i.e. $q=0$).  We mention, among many others, just a few instances.  It was used in  \cite{Mln,Hnt,Ebr} to study curvature properties of Lie groups; in the structure results for Einstein solvmanifolds obtained in \cite{Hbr,standard}; in viewing nilsolitons as critical points and their classification (cf. \cite{soliton,einsteinsolv,Nkl,Wll,Jbl}); in the study of the Ricci flow for $3$-dimensional homogeneous geometries (cf. \cite{GlcPyn}) and for nilmanifolds (cf. \cite{Gzh,Pyn,nilricciflow}).  The approach can also be applied to complex and symplectic homogeneous geometry (cf. \ref{gs}).  In many of these articles, an intriguing relationship with the geometric invariant theory of the variety of Lie algebras, including  closed orbits, categorical quotients, moment maps and Kirwan stratification, has been exploited in one way or another.

\subsection{Examples} The paper includes plenty of situations which illustrate our approach and provide examples and counterexamples to some of the speculations one might make on convergence issues.  Namely:
\begin{itemize}
\item Subsets of $\hca_{0,3}$ and $\hca_{1,3}$ reaching all $3$-dimensional geometries (cf. \ref{ex0-3}, \ref{ex1-3}).

\item A family in $\hca_{1,5}$ parameterizing all homogeneous metrics on $S^3\times S^2$ (cf. \ref{ex1-5}).

\item A $6$-parameter family in $\hca_{1,7}$ attaining any $\SU(3)$-invariant metric on all (generic) Aloff-Wallach spaces (cf. \ref{AW}).

\item A sequence of Aloff-Wallach spaces which infinitesimally converges to another Aloff-Wallach space, but such that it does not admit any pointed or local convergent subsequence (cf. \ref{AW3}).

\item A sequence of alternating left-invariant metrics on $S^3$ (Berger spheres) and $\widetilde{\Sl_2}(\RR)$ which locally converges to a flat metric on the solvable Lie group $E(2)$, but the corresponding subsequences pointed converges to $S^1\times\RR^2$ and $\RR^3$, respectively (cf. \ref{berger}).

\item A divergent sequence $\mu_k\in\hca_{0,3}$ of left-invariant metrics on $\widetilde{\Sl_2}(\RR)$ which nevertheless pointed converges to $\RR\times H^2$, where $H^2$ denotes the $2$-dimensional hyperbolic space.  $\mu_k$ is actually isometric to a convergent sequence in $\hca_{1,3}$ (cf. \ref{hyp}).

\item A sequence $\mu_k\in\hca_{1,5}$ of homogeneous metrics on $S^3\times S^2$ converging to a Lie bracket $\lambda$ which is not in $\hca_{1,5}$.  However, $\lambda$ can be viewed as an element of $\hca_{2,4}$, giving rise to a collapsing of the $\mu_k$ with bounded curvature to a metric on $S^2\times S^2$ (cf. \ref{coll}).
\end{itemize}

\subsection{Ricci flow} Our true motivation to study the `algebraic' convergence of homogeneous manifolds is that the Ricci flow $g(t)$ starting at a homogeneous manifold $(M,g_0)$ is proved in \cite{homRF} to be equivalent to an evolution equation for Lie brackets in the following precise sense: if $(M,g_0)=(G_{\mu_0}/K_{\mu_0},g_{\mu_0})$, $\mu_0\in\hca_{q,n}$, then the solution $\mu=\mu(t)$ to the so called {\it bracket flow} given by the ODE
$$
\ddt\mu=\mu\left(\left[\begin{smallmatrix} 0&0\\ 0&\Ricci_{\mu}
\end{smallmatrix}\right]\cdot,\cdot\right) + \mu\left(\cdot,\left[\begin{smallmatrix} 0&0\\ 0&\Ricci_{\mu}
\end{smallmatrix}\right]\cdot\right) -\left[\begin{smallmatrix} 0&0\\ 0&\Ricci_{\mu}
\end{smallmatrix}\right]\mu(\cdot,\cdot), \qquad \mu(0)=\mu_0,
$$
where $\Ricci_\mu:\pg\longrightarrow\pg$ denotes the Ricci operator of $g_\mu$ at the origin, stays in $\hca_{q,n}$ for all $t$ and
$$
g(t)=\vp(t)^*g_{\mu(t)}
$$
for some family $\vp(t):M=G_{\mu_0}/K_{\mu_0}\longrightarrow G_{\mu(t)}/K_{\mu(t)}$ of time-dependent equivariant diffeomorphisms.  The fixed points of any normalized bracket flow $c(t)\mu(\tau(t))$ are Ricci solitons, and the solutions $g(t)$ and $\mu(t)$ have identical maximal interval of existence time and curvature behavior.  Moreover, as there always exists a convergent subsequence $\mu_k:=\tfrac{1}{\|\mu(t_k)\|}.\mu(t_k)\to\lambda$, one can apply the convergence results obtained in this paper to get pointed subconvergence of the Ricci flow $g(t)$ (up to scaling) to a Ricci soliton $g_\lambda$ (usually nonflat), provided $\lambda\in\hca_{q,n}$ and there is a lower bound for the Lie injectivity radii $r_{\mu_k}$.

\vs \noindent {\it Acknowledgements.}  The author would like to thank John Lott and Peter Petersen for helpful comments, and also to the referee for his/her invaluable corrections and suggestions on a first version of this paper.

\section{Classical setting}\label{hm}

A Riemannian manifold $(M,g)$ is said to be {\it homogeneous} if its isometry group
$\Iso(M,g)$ acts transitively on $M$.  $\Iso(M,g)$ is known to be naturally a Lie
group such that its action on $M$ is smooth and the isotropy subgroup $\Iso_p(M,g)$ at every point $p\in
M$ is compact.  A {\it homogeneous Riemannian space} is instead a differentiable manifold
$G/K$, where $G$ is a Lie group and $K\subset G$ a closed Lie subgroup, endowed with
a $G$-invariant Riemannian metric. Both concepts are of course intimately related,
though not in a one-to-one way. When studying a geometric problem on
homogeneous manifolds, it is often very useful and healthy to capture the
relevant algebraic information and present the hypotheses and the problem in
`algebraic' terms.  We refer to the books \cite[Chapter X]{KbyNmz} and \cite[Chapter 7]{Bss} for a more detailed treatment of what follows.

Let $(M,g)$ be a connected homogeneous manifold.  Then each closed Lie subgroup $G\subset\Iso(M,g)$ acting transitively on $M$ (which can be assumed to be connected) gives rise to a
presentation of $(M,g)$ as a homogeneous space $(G/K,g_{\ip})$, where
$K=G\cap\Iso_p(M,g)$ for some $p\in M$.  Since $K$ turns out to be compact, there always
exists an $\Ad(K)$-invariant direct sum decomposition
$$
\ggo=\kg\oplus\pg,
$$
where $\ggo$ and $\kg$ are respectively the Lie algebras of $G$ and $K$.  Such a decomposition is called {\it reductive} and is not necessarily unique.  Thus $\pg$ can be naturally identified with the tangent space
$$
\pg\equiv\tang_pM=\tang_{eK}G/K,
$$
by taking the value at $p$ of the Killing vector fields
corresponding to elements of $\pg$.  We denote by $g_{\ip}$ the $G$-invariant metric on $G/K$ determined by
$$
\ip:=g(p),
$$
the $\Ad(K)$-invariant inner
product on $\pg$ defined by $g$.

Any kind of curvature of $(G/K,g_{\ip})$, and hence
of $(M,g)$, can therefore be computed in terms of the inner product vector space
$(\pg,\ip)$ and the Lie bracket $\lb$ of $\ggo$ (see for instance \cite[Chapter 7]{Bss}).

\begin{remark}
A homogeneous space $(G/K,g_{\ip})$ will always be assumed to carry a fixed
$\Ad(K)$-invariant decomposition $\ggo=\kg\oplus\pg$.
\end{remark}

In order to get a presentation $(M,g)=(G/K,g_{\ip})$ of a connected homogeneous manifold as a homogeneous space, there is no need for $G\subset\Iso(M,g)$ to hold, that is, an {\it effective} action.  It is actually enough to have a transitive action of $G$ on $M=G/K$, where $K$ is the isotropy subgroup at some point, which is {\it almost-effective} (i.e. $K$ contains no non-discrete normal subgroup of
$G$, or equivalently, the normal subgroup $\{ g\in G:ghK=hK, \;\forall h\in G\}$ is
discrete), along with a decomposition $\ggo=\kg\oplus\pg$ and an inner product $\ip$ on $\pg$, both of them $\Ad(K)$-invariant.  In particular, $G$ can always be chosen to be {\it simply connected} (i.e. connected and with trivial fundamental group) and almost-effective.  If in addition $M$ is simply connected, then $K$ must be connected (although not necessarily compact); and conversely, if $G$ is simply connected and $K$ connected, then $M$ is simply connected (use the homotopy sequence of the fibration $G\longrightarrow G/K$).

The set of all $G$-invariant metrics on $G/K$ is in one-to-one correspondence with the set of all $\Ad(K)$-invariant inner products on $\pg$.  Such a set can be naturally identified with a symmetric subspace (possibly flat) of the symmetric space $\Gl_n^+(\RR)/\SO(n)$ and so it is diffeomorphic to a euclidean space.  It could however be far from covering all homogeneous metrics on the manifold $G/K$.

\section{Varying Lie brackets viewpoint}\label{varhs}

A simply connected homogeneous space $(G/K,g_{\ip})$ with $G$ simply connected is completely characterized (as $K$ must be connected) by the following `algebraic' data:
\begin{itemize}
\item[ ] the vector space decomposition $\ggo=\kg\oplus\pg$;

\item[ ] the inner product $\ip$ on $\pg$;

\item[ ] the Lie bracket $\lb$ of $\ggo$.
\end{itemize}
As the pair $(\ggo=\kg\oplus\pg,\ip)$ can be canonically fixed, this suggests varying Lie brackets to cover a large number of homogeneous manifolds at the same space. In this light, we shall define in this section a set $\hca_{q,n}$ whose elements are simply connected homogeneous spaces and such that any simply connected homogeneous space $(G/K,g_{\ip})$ of dimension $n$ and $\dim{K}=q$ is isometric to at least one point in $\hca_{q,n}$.

Let us fix a decomposition
$$
\RR^{q+n}=\RR^q\oplus\RR^n,
$$
together with the canonical inner product $\ip$ on $\RR^n$.  We consider the space of all skew-symmetric algebras (or brackets) of dimension $q+n$, which is
parameterized by the vector space
$$
V_{q+n}:=\{\mu:\RR^{q+n}\times\RR^{q+n}\longrightarrow\RR^{q+n} : \mu\; \mbox{bilinear and
skew-symmetric}\}.
$$
For any $x\in\RR^{q+n}$, we denote left multiplication (or adjoint action) as usual by $\ad_{\mu}{x}(y)=\mu(x,y)$ for all $y\in\RR^{q+n}$.

A homogeneous space can be associated to an element $\mu\in V_{q+n}$ provided the following conditions hold for $\mu$:

\begin{itemize}
\item [(h1)]  $\mu$ satisfies the Jacobi condition, $\mu(\RR^q,\RR^q)\subset\RR^q$ and $\mu(\RR^q,\RR^n)\subset\RR^n$.

\item[(h2)] If $G_\mu$ denotes the simply connected Lie group with Lie algebra $(\RR^{q+n},\mu)$ and $K_\mu$ is the connected Lie subgroup of $G_\mu$ with Lie algebra $\RR^q$, then $K_\mu$ is closed in $G_\mu$.

\item[(h3)] $\ip$ is $\ad_{\mu}{\RR^q}$-invariant (i.e. $(\ad_{\mu}{z}|_{\RR^n})^t=-\ad_{\mu}{z}|_{\RR^n}$ for all $z\in\RR^q$).

\item[(h4)] $\{ z\in\RR^q:\mu(z,\RR^n)=0\}=0$.
\end{itemize}

Indeed, by (h2), the simply connected topological space $G_{\mu}/K_{\mu}$ admits a unique differentiable manifold structure such that the quotient map $\pi_\mu:G_\mu\longrightarrow G_\mu/K_\mu$ is smooth and admits local smooth sections, or equivalently, the $G_{\mu}$-action on
$G_{\mu}/K_{\mu}$ is smooth (see \cite[3.58,3.63]{Wrn}).  Such an action is almost-effective by (h4), and it follows from (h3) that $\ip$ is
$\Ad(K_{\mu})$-invariant as $K_{\mu}$ is connected.  All this is already
enough to get a homogeneous space,
\begin{equation}\label{hsmu}
\mu\in\hca_{q,n}\rightsquigarrow\left(G_{\mu}/K_{\mu},g_\mu\right),
\end{equation}
with $\Ad(K_{\mu})$-invariant decomposition $\RR^{q+n}=\RR^q\oplus\RR^n$ and $g_\mu(eK_\mu)=\ip$ (see
\cite[p.200]{KbyNmz} or \cite[7.24,7.12]{Bss}), where
\begin{equation}\label{hkn}
\hca_{q,n}:=\{\mu\in V_{q+n}: \;\mbox{conditions (h1)-(h4) hold for}\;\mu\}.
\end{equation}
If for $u\in G_\mu$ we denote by $\tau_\mu(u):G_\mu/K_\mu\longrightarrow G_\mu/K_\mu$ the diffeomorphism
$$
\tau_\mu(u)(vK_\mu):=uvK_\mu, \qquad v\in G_\mu,
$$
then the metric $g_\mu$ is given by
\begin{equation}\label{gmu}
g_\mu(uK_\mu)\left(\dif\tau_\mu(u)|_{eK_\mu}x, \dif\tau_\mu(u)|_{eK_\mu}y\right)=\la x,y\ra, \qquad\forall x,y\in\RR^n, \quad u\in G_\mu.
\end{equation}

We note that any $n$-dimensional homogeneous Riemannian space $(G/K,g_{\ip})$ with $G$ simply connected and $\Ad(K)$-invariant decomposition $\ggo=\kg\oplus\pg$ which is almost-effective can be identified with some $\mu\in\hca_{q,n}$, where $q=\dim{K}$.  Indeed, one just has to fix a basis of $\kg$ and an orthonormal basis of $\pg$ in order to get identifications $\kg=\RR^q$, $\pg=\RR^n$, and so $\mu$ is precisely the Lie bracket of $\ggo$.  In particular, in the set
$$
\hca_n:=\bigcup\limits_{q=0}^{n(n-1)/2}\,\hca_{q,n},
$$
all simply connected homogeneous Riemannian manifolds of dimension $n$ (up to isometry) are represented, though often by several different points which may even represent inequivalent homogeneous spaces (see Section \ref{equivtypes}).

If $\hca_{q,n}$ is nonempty, which is not always the case (e.g. $\hca_{2,3}=\emptyset$), then there must be a flat element in $\hca_{q,n}$.  Indeed, for any $\mu\in\hca_{q,n}$ one can define $\lambda\in V_{q+n}$ by $\lambda|_{\RR^q\times\RR^{q+n}}:=\mu$, $\lambda|_{\RR^n\times\RR^{n}}:=0$, for which conditions (h1)-(h4) can be easily verified, getting the flat manifold $\left(G_{\lambda}/K_{\lambda},g_\lambda\right)=((K\ltimes\RR^n)/K,g_{\ip})$ for some compact subgroup $K\subset\Or(n)$.

Concerning the question of what kind of subset of $V_{q+n}$ the space $\hca_{q,n}$ is, we note that conditions (h1) and (h3) are closed, they are even defined by polynomial equations on $\mu$.  On the contrary, (h4) is open and (h2) may impose a very subtle condition on $\mu$, as Examples \ref{ex1-5}, \ref{AW} show.  Notice that $\hca_{q,n}$ is a cone, i.e. invariant by any nonzero scaling.

\begin{example}\label{ex0-n}
If $q=0$, then conditions (h2)-(h4) trivially hold and (h1) is just the Jacobi condition for $\mu$.  Thus $\hca_{0,n}=\lca_n$, the variety of $n$-dimensional Lie algebras, and the set $\{\left(G_\mu,g_\mu\right):\mu\in\lca_n\}$ parameterizes the set of all left-invariant metrics on simply connected Lie groups of dimension $n$ (cf. Section \ref{lgcase} for a more detailed study of this case).
\end{example}

The next two examples reach all $3$-dimensional geometries.

\begin{example}\label{ex0-3}
Let $\mu=\mu_{a,b,c}$ be the Lie bracket in $\hca_{0,3}=\lca_3$ defined by
$$
\mu(e_2,e_3)=ae_1, \qquad \mu(e_3,e_1)=be_2, \qquad \mu(e_1,e_2)=ce_3.
$$
Their isomorphism classes are invariant by permutation of $(a,b,c)$ and scaling, so we can assume $a\geq b\geq c$ and that at most one of them is negative.  The Lie algebras (and geometries) attained by this family are
\begin{equation}\label{ex0-32}
\mu\simeq\left\{\begin{array}{lcl}
\sug(2), && a,b,c>0; \\
\slg_2(\RR), && a,b>0,\, c<0; \\
\eg(2), && a,b>0,\, c=0; \\
\eg(1,1), && a>0,\, b=0,\, c<0; \\
\hg_3, && a>0,\, b=c=0; \\
\RR^3, && a=b=c=0;
\end{array}\right. \quad
G_\mu=\left\{\begin{array}{l}
S^3; \\
\widetilde{\Sl_2}(\RR); \\
E(2); \\
Sol; \\
Nil; \\
\RR^3;
\end{array}\right.
\end{equation}
where $\eg(2)$, $\eg(1,1)$ are unimodular solvable Lie algebras and $\hg_3$ is the $3$-dimensional Heisenberg Lie algebra.  These are all $3$-dimensional unimodular real Lie algebras, and any left-invariant metric on any of the corresponding simply connected Lie groups is isometric to some $\mu_{a,b,c}$ (see \cite[Section 4]{Mln}).  We have added on the right of (\ref{ex0-32}) the $3$-dimensional geometries from the Geometrization Conjecture which are covered by the family $\mu_{a,b,c}$ by using the standard notation.  With the only exception of $S^3$, they are all diffeomorphic to the euclidean space $\RR^3$.  In \cite{GlcPyn}, this presentation as a space of Lie brackets is used to study the Ricci flow of these metrics.
\end{example}

\begin{example}\label{ex1-3}
Consider the decomposition $\RR^4=\RR\oplus\RR^3$ and the bracket $\mu=\mu_{a,b,c,d}\in V_{1+3}$ given by
$$
\left\{\begin{array}{lll}
\mu(e_3,e_0)=de_2, & \mu(e_2,e_3)=ae_1+be_0, & \mu(e_3,e_1)=ce_2, \\
\mu(e_0,e_2)=de_3, &                           & \mu(e_1,e_2)=ce_3.
\end{array}\right.
$$
It is straightforward to see that conditions (h1) and (h3) hold and that (h4) does if and only if $d\ne 0$.  By computing the Killing form, it is easy to conclude that the Lie algebras (and geometries) attained by this family are
\begin{equation}\label{ex1-32}
\mu\simeq\left\{\begin{array}{lcl}
\RR\oplus\sug(2), && ac+bd>0; \\
\RR\oplus\slg_2(\RR), && ac+bd<0; \\
\RR\ltimes \eg(2),\; \RR\ltimes\hg_3, && ac+bd=0;
\end{array}\right. \quad
G_\mu/K_\mu=\left\{\begin{array}{l}
S^3, \; \RR\times S^2; \\
\widetilde{\Sl_2}(\RR),\; \RR\times H^2; \\
E(2),\; Nil,\; \RR^3.
\end{array}\right.
\end{equation}
In the case when $ac+bd>0$, one can use the isomorphism $G_\mu\simeq\RR\times\SU(2)$ to see that $K_\mu$ is a spiral inside a cylinder $\RR\times S^1$ and thus $K_\mu$ is closed in $G_\mu$.  Notice that otherwise, any Lie subgroup of $G_\mu$ is closed, so that condition (h2) is always satisfied.  We conclude that $\mu_{a,b,c,d}\in\hca_{1,3}$ if and only if $d\ne 0$.  For $a=0$ and $b\ne 0$ we obtain the geometries $\RR\times S^2$ and $\RR\times H^2$, where $H^2$ denotes the $2$-dimensional hyperbolic space.  All the remaining homogeneous metrics $g_{\mu_{a,b,c,d}}$ can be alternatively viewed as left-invariant metrics on $3$-dimensional unimodular Lie groups with an extra symmetry, and hence they have all already appeared in Example \ref{ex0-3}.
\end{example}

All homogeneous metrics on $S^3\times S^2$ can be attained as follows.

\begin{example}\label{ex1-5}
Consider the decomposition $\RR^6=\RR\oplus\RR^5$ and the bracket $\mu=\mu_{p,q,a,b,c,d,e,f}\in V_{1+5}$ given by
$$
\left\{\begin{array}{lll}
\mu(e_0,e_2)=pe_3, & \mu(e_1,e_2)=ee_3, & \mu(e_2,e_3)=ae_0+be_1, \\
\mu(e_0,e_3)=-pe_2, &  \mu(e_1,e_3)=-ee_2,  & \mu(e_4,e_5)=ce_0+de_1. \\
\mu(e_0,e_4)=qe_5, & \mu(e_1,e_4)=fe_5, &  \\
\mu(e_0,e_5)=-qe_4, &  \mu(e_1,e_5)=-fe_4,  &
\end{array}\right.
$$
It is easy to see that the conditions to get $\mu\in\hca_{1,5}$ can be written as follows:
\begin{itemize}
\item[(h1)] $aq+bf=0$,  $\quad cp+de=0$.
\item[(h2)] $p/q\in\QQ$.
\item[(h3)] always holds.
\item[(h4)] $(p,q)\ne(0,0)$.
\end{itemize}
If we assume that $pf-qe\ne 0$, then some of the Lie algebras involved are
$$
\mu\simeq\left\{\begin{array}{lcl}
\sug(2)\oplus\sug(2), && ap+be>0,\quad cq+df>0; \\
\slg_2(\RR)\oplus\slg_2(\RR), && ap+be<0,\quad cq+df<0; \\
\sug(2)\oplus\slg_2(\RR), && (ap+be)(cq+df)<0,
\end{array}\right.
$$
which can be viewed as Lie algebras of matrices in the following way:
$$
\begin{array}{lll}
e_0=\unm(pX_1,qX_1), & e_2=\unm(rX_2,0), & e_4=\unm(0,sX_2), \\ \\
e_1=\unm(eX_1,fX_1), & e_3=\unm(rX_3,0), & e_5=\unm(0,sX_3),
\end{array}
$$
where $r=|ap+be|^{\unm}$, $s=|cq+df|^{\unm}$ and $\{ X_1,X_2,X_3\}\subset\glg_2(\CC)$ is a basis of either $\sug(2)$ or $\slg_2(\RR)$ such that
$$
[X_1,X_2]=2X_3, \quad [X_1,X_3]=-2X_2, \quad [X_2,X_3]=\pm 2X_1.
$$
The equivalence between condition (h2) and $p/q\in\QQ$ is now more transparent, as $K_\mu=e^{\RR e_0}=\{ (e^{tpX_1},e^{tqX_1}):t\in\RR\}$ and one may take $X_1=\left[\begin{smallmatrix} i&0\\ 0&-i\end{smallmatrix}\right]$ for $\sug(2)$ and $X_1=\left[\begin{smallmatrix} 0&-1\\ 1&0\end{smallmatrix}\right]$ for $\slg_2(\RR)$.  A particularly interesting case is when $\mu\simeq\sug(2)\oplus\sug(2)$, since the homogeneous spaces $G_\mu/K_\mu=(\SU(2)\times\SU(2))/S^1$ are all diffeomorphic to $S^3\times S^2$, and actually any homogeneous metric on $S^3\times S^2$ is represented in $\hca_{1,5}$ by a tuple $(p,...,f)$ (cf. e.g. \cite[Example 6.8]{BhmWngZll}).  For different values of $p,...,f$ one gets many other homogeneous spaces, including left-invariant metrics on solvmanifolds as $E(2)\times\RR^2$ and on nilmanifolds as $Nil\times\RR^2$ or the $5$-dimensional Heisenberg Lie group $H_5$.
\end{example}

\begin{remark}
Any of the brackets considered in the above example can also be viewed as an element in $\hca_{2,4}$ by putting
$$
\RR^6=\RR^2\oplus\RR^4 = \la e_0,e_1\ra\oplus\la e_2,...,e_5\ra,
$$
which is easily seen to cover all homogeneous metrics on $S^2\times S^2$, $S^2\times H^2$ and $H^2\times H^2$.
\end{remark}

In the following example, we cover all $\SU(3)$-invariant metrics on each (generic) Aloff-Wallach space $\SU(3)/S^1_{p,q}$.

\begin{example}\label{AW} ({\it Aloff-Wallach spaces})
Consider the decomposition $\RR^8=\RR\oplus\RR^7$ and the bracket $\mu=\mu_{p,q,a,b,c,d}\in V_{1+7}$, $a,b,c,d>0$, given by
$$
\left\{\begin{array}{ll}
\mu(e_0,e_2)=-d(p+2q)e_3, & \mu(e_1,e_2)=-pe_3,  \\
\mu(e_0,e_3)=d(p+2q)e_2, &  \mu(e_1,e_3)=pe_2,  \\
\mu(e_0,e_4)=-d(2p+q)e_5, & \mu(e_1,e_4)=qe_5,   \\
\mu(e_0,e_5)=d(2p+q)e_4, &  \mu(e_1,e_5)=-qe_4,   \\
\mu(e_0,e_6)=-d(p-q)e_7, &  \mu(e_1,e_6)=(p+q)e_7,  \\
\mu(e_0,e_7)=d(p-q)e_6, &  \mu(e_1,e_7)=-(p+q)e_6,   \\ \\

\mu(e_4,e_6)=\mu(e_5,e_7)=-(\tfrac{3bcd}{a})^{\unm}e_2, & \mu(e_5,e_6)=-\mu(e_4,e_7)=-(\tfrac{3bcd}{a})^{\unm}e_3, \\

\mu(e_6,e_2)=-\mu(e_7,e_3)=-(\tfrac{3acd}{b})^{\unm}e_4, & \mu(e_7,e_2)=\mu(e_6,e_3)=-(\tfrac{3acd}{b})^{\unm}e_5, \\

\mu(e_2,e_4)=\mu(e_3,e_5)=-(\tfrac{3abd}{c})^{\unm}e_6, & \mu(e_3,e_4)=-\mu(e_2,e_5)=(\tfrac{3abd}{c})^{\unm}e_7, \\ \\

\mu(e_2,e_3)=-a(p+2q)e_0-3adpe_1, & \mu(e_4,e_5)=-b(2p+q)e_0+3bdqe_1, \\
 \mu(e_6,e_7)=-c(p-q)e_0+3cd(p+q)e_1. & \\
\end{array}\right.
$$

We have that $(\RR^8,\mu)$ is always isomorphic to $\sug(3)$, as these are precisely the Lie bracket relations for the basis $\{ e_0,...,e_8\}$ of $\sug(3)$ given by
$$
\begin{array}{lll}
e_0=\left[\begin{smallmatrix} ipd&&\\ &iqd&\\ &&-i(p+q)d\end{smallmatrix}\right], &
e_1=\tfrac{1}{3}\left[\begin{smallmatrix} -i(p+2q)&&\\ & i(2p+q)&\\ && i(q-p)\end{smallmatrix}\right], & \\ \\

e_2=(3ad)^{\unm}\left[\begin{smallmatrix} 0&&\\ &0&-1\\ &1&0\end{smallmatrix}\right], &
e_3=(3ad)^{\unm}\left[\begin{smallmatrix} 0&&\\ &0&i\\ &i&0\end{smallmatrix}\right], &
e_4=(3bd)^{\unm}\left[\begin{smallmatrix} 0&&-1\\ &0&\\ 1&&0\end{smallmatrix}\right], \\ \\

e_5=(3bd)^{\unm}\left[\begin{smallmatrix} 0&&i\\ &0&\\ i&&0\end{smallmatrix}\right], &
e_6=(3cd)^{\unm}\left[\begin{smallmatrix} 0&-1&\\ 1&0&\\ &&0\end{smallmatrix}\right], &
e_7=(3cd)^{\unm}\left[\begin{smallmatrix} 0&i&\\ i&0&\\ &&0\end{smallmatrix}\right]. \\
\end{array}
$$

Conditions (h1), (h3) and (h4) are all satisfied by any of these $\mu$'s, and concering (h2), we note that
$$
S^1_{p,q}:=K_\mu=e^{\RR e_0},
$$
is closed in $\SU(3)$ (and hence $S^1_{p,q}\simeq S^1$) if and only if $p/q\in\QQ$ (think of $S^1_{p,q}$ as a subgroup of the maximal torus $S^1\times S^1$of $\SU(3)$).  As a differentiable manifold, $G_\mu/K_\mu$ only depends on $p,q$, and so we define
$$
W_{p,q}:=G_\mu/K_\mu=\SU(3)/S^1_{p,q}, \qquad\forall \mu=\mu_{p,q,a,b,c,d}.
$$
These homogeneous manifolds are called in the literature {\it Aloff-Wallach} spaces and have been extensively studied (cf. e.g. \cite{AlfWll,KrcStl,Krg,Ptt}).  By fixing $p,q$ and varying $a,b,c,d$ we get all $\SU(3)$-invariant metrics on $W_{p,q}$ if $p\ne \pm q$ (cf. \cite[Corollary 4.3]{Ptt}).  We note for future use that $W_{rp,rq}=W_{p,q}$ as differentiable manifolds for any $r\in\RR$.
\end{example}

\begin{remark}\label{gs}
If instead of $\ip$ we fix a complex structure $J$ on $\RR^n$ (i.e. an endomorphism such that $J^2=-I$) and change condition (h3) by $[\ad_\mu{\RR^q}|_{\RR^n},J]=0$, then what each $\mu\in\hca_{q,n}$ will represent is a homogeneous space endowed with a left-invariant almost-complex structure.  By adding the integrability of $(G_\mu/K_\mu,J)$ as condition (h5) in the definition of $\hca_{q,n}$, which happens to only depend on $\mu$ and in a polynomial way, we obtain a parametrization of all $n$-dimensional simply connected complex homogeneous spaces with $q$-dimensional isotropy.  One may furthermore fix again an inner product $\ip$ on $\RR^n$ compatible with $J$ (i.e. $\la J\cdot,J\cdot\ra=\ip$) and require condition (h3) on $\ip$, in order to parameterize hermitian (or almost-hermitian if the integrability of $J$ is removed) homogeneous spaces  $(G_\mu/K_\mu,J,\ip)$.  Notice that the subset of those which are K\"ahler is just defined by extra polynomial conditions on $\mu$.  An analogous setting can be developed for symplectic, hypercomplex, and many other classes of geometric structures.  This approach has only been explored in the case of nilmanifolds (i.e. $q=0$ and $\mu$ nilpotent) in \cite{minimal}.  How do the convergence results obtained in Section \ref{convsec} fit into deformation theory of complex or symplectic manifolds?
\end{remark}

\section{Different notions of equivalence}\label{equivtypes}

The question of whether two given homogeneous spaces are isometric or not is usually a difficult task to handle, as it is the question on determining their diffeomorphism or even homeomorphism types.  There is a fourth natural equivalence relation between homogeneous spaces which involves their algebraic structure: $G/K$ and $G'/K'$ are said to be {\it equivariantly diffeomorphic} if there exists an isomorphism of Lie groups $\tilde{\vp}:G\longrightarrow G'$ such that $\tilde{\vp}(K)=K'$.  In that case, if $\vp:G/K\longrightarrow G'/K'$ is the corresponding {\it equivariant diffeomorphism} (i.e. $\vp\circ\pi=\pi'\circ\tilde{\vp}$), then
$$
\tau'(\tilde{\vp}(u))=\vp\tau(u)\vp^{-1}, \qquad\forall u\in G,
$$
that is, the actions of $G,G'$ on $G/K,G'/K'$, respectively, are equivalent or equivariant.  Two homogeneous spaces $(G/K,g_{\ip})$ and $(G'/K',g_{\ip'})$ are called {\it equivariantly isometric} if $g_{\ip}=\vp^*g_{\ip'}$ for some equivariant diffeomorphism $\vp:G/K\longrightarrow G'/K'$ (i.e. $\dif\vp|_{eK}$ is in addition an inner product space isometry between $(\pg,\ip)$ and $(\pg',\ip')$).

\begin{figure}
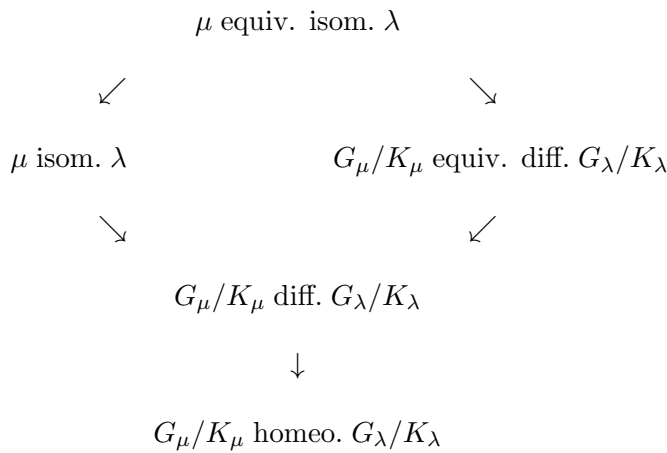

$$
\begin{array}{rcl}
& \mu \;\mbox{equiv. isom.}\; \lambda & \\ \\
\swarrow && \searrow \\ \\
\mu \;\mbox{isom.}\; \lambda &&
\hspace{-1.8cm} G_\mu/K_\mu \;\mbox{equiv. diff.}\; G_\lambda/K_\lambda \\ \\
\searrow && \swarrow \\ \\
& G_\mu/K_\mu \;\mbox{diff.}\; G_\lambda/K_\lambda & \\ \\
& \downarrow & \\ \\
& G_\mu/K_\mu \;\mbox{homeo.}\; G_\lambda/K_\lambda &
\end{array}
$$
\caption{Notions of equivalence by degree of generality}\label{equivs}
\end{figure}


In Figure \ref{equivs}, we have listed the equivalence relations between homogeneous spaces we have just mentioned according to their levels of generality.  All the converse assertions are false.  Aloff-Wallach spaces (see Example \ref{AW2}) provide examples of homeomorphic but nondiffeomorphic homogeneous spaces, as well as diffeomorphic homogeneous spaces which are not equivariantly diffeomorphic (see also Example \ref{ex1-52}).  On the other hand, certain nonabelian solvable Lie groups admit flat left-invariant metrics, providing examples of isometric homogeneous spaces which are not equivariantly isometric.

\begin{example}\label{AW2}
Let $W_{p,q}=\SU(3)/S^1_{p,q}$ be the Aloff-Wallach space described in Example \ref{AW}, and assume that $p,q\in\ZZ$ and are coprime.  It is well known that $W_{p,q}$ has fourth cohomology ring $H^4(W_{p,q},\ZZ)=\ZZ_r$, the cyclic group of order $r:=p^2+pq+q^2$ (see \cite[Lemma 3.3]{AlfWll}), showing that there are infinitely many homeomorphism classes among these spaces.  More precisely, if $s:=pq(p+q)$ then the following conditions must be added to $r=\tilde{r}$ in order to get the respective equivalence type between $W_{p,q}$ and $W_{\tilde{p},\tilde{q}}$:

\begin{itemize}
\item homotopy equivalent: $s\equiv\pm\tilde{s}\mod{r}$ (see \cite{Krg});

\item homeomorphic: $s\equiv\pm\tilde{s}\mod{2^3.3.r}$ (see \cite{KrcStl});

\item diffeomorphic: $s\equiv\pm\tilde{s}\mod{2^5.3.r}$ if $r$ is a multiple of $7$, and $\mod{2^5.3.7.r}$ otherwise (see \cite{Krg});

\item equivariantly diffeomorphic: $\{\tilde{p},\tilde{q},-(\tilde{p}+\tilde{q})\}=\{ p,q,-(p+q)\}$ (that is, the at most six possibilities of having $S^1_{p,q}$ and $S^1_{\tilde{p},\tilde{q}}$ conjugate in $\SU(3)$).
\end{itemize}

It was not a trivial task to find explicit pairs $(p,q)$, $(\tilde{p},\tilde{q})$ showing that none of the above equivalence types coincide for Aloff-Wallach spaces (see \cite{Krg,KrcStl}).
\end{example}

In what follows, we are interested in describing as simple as possible, for a given notion, the equivalence class of a homogeneous space $\mu\in\hca_{q,n}$ (see identification (\ref{hsmu})) as a subset of $\hca_{q,n}$.  There is a natural linear action of $\Gl_{q+n}(\RR)$ on $V_{q+n}$ given by
\begin{equation}\label{action}
h.\mu(x,y)=h\mu(h^{-1}x,h^{-1}y), \qquad x,y\in\RR^{q+n}, \quad h\in\Gl_{q+n}(\RR),\quad \mu\in V_{q+n}.
\end{equation}
The variety of Lie algebras $\lca_{q+n}$ is $\Gl_{q+n}(\RR)$-invariant, the Lie algebra isomorphism classes are
precisely the $\Gl_{q+n}(\RR)$-orbits and the isotropy subgroup $\Gl_{q+n}(\RR)_\mu$ equals $\Aut(\RR^{q+n},\mu)$ for any $\mu\in\lca_{q+n}$.

\begin{proposition}\label{const}
If $\mu\in\hca_{q,n}$ then $h.\mu\in\hca_{q,n}$ for any $h\in\Gl_{q+n}(\RR)$ of the form
\begin{equation}\label{formh}
h:=\left[\begin{smallmatrix} h_q&A\\ 0&h_n
\end{smallmatrix}\right]\in\Gl_{q+n}(\RR), \quad h_q\in\Gl_q(\RR), \quad h_n\in\Gl_n(\RR), \quad A:\RR^n\longrightarrow\RR^q,
\end{equation}
such that
\begin{equation}\label{adkh}
[h_n^th_n,\ad_{\mu}{\RR^q}|_{\RR^n}]=0,
\end{equation}
and
\begin{equation}\label{adkh2}
A\ad_{\mu}{z}|_{\RR^n}=h_q\ad_{\mu}{z}|_{\RR^q}h_q^{-1}A, \qquad\forall z\in\RR^q.
\end{equation}
In that case,  $G_{h.\mu}/K_{h.\mu}$ and $G_{\mu}/K_{\mu}$ are equivariantly diffeomorphic and $\left(G_{h.\mu}/K_{h.\mu},g_{h.\mu}\right)$ is equivariantly isometric to $\left(G_{\mu}/K_{\mu},g_{\la h_n\cdot,h_n\cdot\ra}\right)$.
\end{proposition}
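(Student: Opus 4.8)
The plan is to exploit that, by the definition (\ref{action}) of the action, the map $h$ is a Lie algebra isomorphism $h\colon(\RR^{q+n},\mu)\to(\RR^{q+n},h.\mu)$, and that by its block form (\ref{formh}) both $h$ and $h^{-1}$ are upper triangular, the diagonal blocks of $h^{-1}$ being $h_q^{-1},h_n^{-1}$ and its off-diagonal block $-h_q^{-1}Ah_n^{-1}$; in particular $h(\RR^q)=h^{-1}(\RR^q)=\RR^q$. Integrating $h$ yields a Lie group isomorphism $\tilde\vp\colon G_\mu\to G_{h.\mu}$ of the simply connected Lie groups with $\dif\tilde\vp|_e=h$. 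Once I check that $h.\mu\in\hca_{q,n}$ and identify the induced metric at the origin, the two geometric statements follow easily.

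First I would verify (h1) for $h.\mu$. The Jacobi identity is automatic since $\lca_{q+n}$ is $\Gl_{q+n}(\RR)$-invariant, and $h.\mu(\RR^q,\RR^q)=h\,\mu(h^{-1}\RR^q,h^{-1}\RR^q)\subset h(\RR^q)=\RR^q$ is immediate. The one delicate point is $h.\mu(\RR^q,\RR^n)\subset\RR^n$: for $z\in\RR^q$ and $y\in\RR^n$ we have $h^{-1}y=-h_q^{-1}Ah_n^{-1}y+h_n^{-1}y$, so
\[
h.\mu(z,y)=h\big(\mu(h_q^{-1}z,-h_q^{-1}Ah_n^{-1}y)\big)+h\big(\mu(h_q^{-1}z,h_n^{-1}y)\big),
\]
where by (h1) for $\mu$ the first bracket lies in $\RR^q$ and the second in $\RR^n$. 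Collecting the $\RR^q$-component of the right-hand side gives $-h_q\,\ad_\mu(h_q^{-1}z)|_{\RR^q}\,h_q^{-1}Ah_n^{-1}y+A\,\ad_\mu(h_q^{-1}z)|_{\RR^n}\,h_n^{-1}y$, which vanishes for all such $y$ exactly because of (\ref{adkh2}) applied to $h_q^{-1}z$ (which runs over all of $\RR^q$). Hence $h.\mu(z,y)\in\RR^n$, and reading off its $\RR^n$-component yields the key identity
\[
\ad_{h.\mu}z|_{\RR^n}=h_n\,\ad_\mu(h_q^{-1}z)|_{\RR^n}\,h_n^{-1},\qquad z\in\RR^q.
\]
I expect this step to be the main obstacle: essentially all the bookkeeping is here, one must carry the off-diagonal block $A$ through $h$ and $h^{-1}$ and recognize (\ref{adkh2}) as precisely the relation killing the unwanted $\RR^q$-component.

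With the displayed identity in hand the remaining conditions are short. Condition (h3) holds because for a $\ip$-skew operator $D:=\ad_\mu(h_q^{-1}z)|_{\RR^n}$ one has $(h_nDh_n^{-1})^t=-h_nDh_n^{-1}$ if and only if $[h_n^th_n,D]=0$, which is (\ref{adkh}) since $h_q^{-1}z$ runs over $\RR^q$; condition (h4) holds because $\ad_{h.\mu}z|_{\RR^n}=0$ forces $\ad_\mu(h_q^{-1}z)|_{\RR^n}=0$, hence $h_q^{-1}z=0$ by (h4) for $\mu$; and condition (h2) holds because $\tilde\vp$ is a homeomorphism taking the closed subgroup $K_\mu$ onto the connected subgroup of $G_{h.\mu}$ with Lie algebra $h(\RR^q)=\RR^q$, which is $K_{h.\mu}$. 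Therefore $h.\mu\in\hca_{q,n}$.

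Finally, the same linear-algebra computation used for (h3) shows that $[h_n^th_n,\ad_\mu\RR^q|_{\RR^n}]=0$ is equivalent to $\la h_n\cdot,h_n\cdot\ra$ being $\Ad(K_\mu)$-invariant, so $g_{\la h_n\cdot,h_n\cdot\ra}$ is a well-defined $G_\mu$-invariant metric on $G_\mu/K_\mu$. The diffeomorphism $\vp\colon G_\mu/K_\mu\to G_{h.\mu}/K_{h.\mu}$ induced by $\tilde\vp$ (so $\vp\circ\pi_\mu=\pi_{h.\mu}\circ\tilde\vp$) is equivariant, giving the asserted equivariant diffeomorphism. Moreover, differentiating $\vp\circ\pi_\mu=\pi_{h.\mu}\circ\tilde\vp$ at $e$ and using $hx=Ax+h_nx$ with $Ax\in\Ker\dif\pi_{h.\mu}|_e=\RR^q$, one sees that under the canonical identifications $\tang_{eK_\mu}(G_\mu/K_\mu)\equiv\RR^n\equiv\tang_{eK_{h.\mu}}(G_{h.\mu}/K_{h.\mu})$ the differential $\dif\vp|_{eK_\mu}$ is simply $h_n$. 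Hence $(\vp^*g_{h.\mu})(eK_\mu)=\la h_n\cdot,h_n\cdot\ra$, and since both $\vp^*g_{h.\mu}$ and $g_{\la h_n\cdot,h_n\cdot\ra}$ are $G_\mu$-invariant they coincide, which is the claimed equivariant isometry.
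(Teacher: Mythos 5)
Your proposal is correct and follows essentially the same route as the paper: the same computation carrying the off-diagonal block through $h^{-1}$ to show via (\ref{adkh2}) that $h.\mu(\RR^q,\RR^n)\subset\RR^n$, the resulting identity $\ad_{h.\mu}z|_{\RR^n}=h_n\ad_\mu(h_q^{-1}z)|_{\RR^n}h_n^{-1}$ handling (h3) and (h4), integration of $h$ to a group isomorphism for (h2), and the identification $\dif\vp|_{eK_\mu}=h_n$ for the isometry. The only (welcome) addition is your explicit remark that (\ref{adkh}) also guarantees $\la h_n\cdot,h_n\cdot\ra$ is $\Ad(K_\mu)$-invariant, which the paper leaves implicit.
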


\begin{remark}\label{const3}
It follows from the last assertion in the above proposition that the subspace
$$
\left\{ h.\mu:h_q=I,\, A=0,\,h_n\,\mbox{satisfies (\ref{adkh})}\right\}\subset\hca_{q,n},
$$
parameterizes the set of all $G_\mu$-invariant metrics on $G_\mu/K_\mu$.   Also notice that $\mu$ and $h.\mu$ have the same volume if $\det{h_n}=1$.
\end{remark}

\begin{proof}
For such an $h$, we must check that $\lambda:=h.\mu$ satisfies conditions (h1)-(h4) defining $\hca_{q,n}$ (see (\ref{hkn})).  We first note that
$$
h^{-1}:=\left[\begin{smallmatrix} h_q^{-1}&-h_q^{-1}Ah_n^{-1}\\ 0&h_n^{-1}
\end{smallmatrix}\right].
$$
Condition (h1) always holds since $h:(\RR^{q+n},\mu)\longrightarrow (\RR^{q+n},\lambda)$ is an isomorphism of Lie
algebras leaving $\RR^q$ invariant and it follows from (\ref{adkh2}) that for all $z\in\RR^q$, $x\in\RR^n$,
\begin{align}
\lambda(z,x)=& h_q\mu(h_q^{-1}z,-h_q^{-1}Ah_n^{-1}x) + A\mu(h_q^{-1}z,h_n^{-1}x) + h_n\mu(h_q^{-1}z,h_n^{-1}x) \notag \\ 
=& -h_q\ad_{\mu}(h_q^{-1}z)h_q^{-1}Ah_n^{-1}x + A\ad_{\mu}(h_q^{-1}z)h_n^{-1}x + h_n\mu(h_q^{-1}z,h_n^{-1}x)  \label{adkh3} \\
=& h_n\mu(h_q^{-1}z,h_n^{-1}x)\in\RR^n. \notag
\end{align}
We therefore obtain that $\ad_\lambda{z}|_{\RR^n}=h_n\ad_\mu{h_q^{-1}z}|_{\RR^n}h_n^{-1}$ for all $z\in\RR^q$, which
implies that (h4) holds for $\lambda$, and also that $\lambda$ satisfies (h3) if and only if (\ref{adkh}) holds.

There exists a unique isomorphism of Lie groups
$$
\tilde{\vp}:G_\mu\longrightarrow G_\lambda, \qquad\mbox{such that} \qquad
\dif\tilde{\vp}|_e=h.
$$
Since $\tilde{\vp}(K_\mu)$ is a connected Lie subgroup of $G_\lambda$  with Lie algebra $h(\RR^q)=\RR^q$, we have that $K_\lambda=\tilde{\vp}(K_\mu)$ and thus (h2) follows.

Concerning the last assertion, we have that the diffeomorphism
\begin{equation}\label{isom}
\vp:G_\mu/K_\mu\longrightarrow G_\lambda/K_\lambda, \qquad \vp(uK_\mu):=\tilde{\vp}(u)K_{\lambda}, \qquad\forall u\in G_\mu,
\end{equation}
is well defined and is an isometry between the homogeneous spaces $\left(G_{\mu}/K_{\mu},g_{\la h_n\cdot,h_n\cdot\ra}\right)$ and $\left(G_\lambda/K_\lambda,g_\lambda\right)$, as $\dif\vp|_{eK_\mu}$ coincides with the inner product space isometry
$$
h_n:(\RR^n,\la h_n\cdot,h_n\cdot\ra)\longrightarrow(\RR^n,\ip),
$$
under the natural identifications.
\end{proof}

\begin{corollary}\label{const2}
The group $\Gl_q(\RR)\times\Or(n)$ leaves the set $\hca_{q,n}$ invariant and $h.\mu$ and $\mu$ are equivariantly isometric homogeneous spaces for any $h\in\Gl_q(\RR)\times\Or(n)$, $\mu\in\hca_{q,n}$.
\end{corollary}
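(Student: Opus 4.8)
The plan is simply to specialize Proposition \ref{const} to block-diagonal matrices. I would write an element $h\in\Gl_q(\RR)\times\Or(n)$ as $h=\left[\begin{smallmatrix} h_q&0\\ 0&h_n\end{smallmatrix}\right]$ with $h_q\in\Gl_q(\RR)$ and $h_n\in\Or(n)$, which is exactly a matrix of the form (\ref{formh}) with $A=0$. The first step is to check that the two extra hypotheses of Proposition \ref{const} hold automatically in this case. Condition (\ref{adkh}) reads $[h_n^th_n,\ad_\mu{\RR^q}|_{\RR^n}]=0$, and it holds trivially because $h_n$ is orthogonal, so $h_n^th_n=I$ and $I$ commutes with everything. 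Condition (\ref{adkh2}) reads $A\ad_\mu{z}|_{\RR^n}=h_q\ad_\mu{z}|_{\RR^q}h_q^{-1}A$ for all $z\in\RR^q$, and both sides vanish since $A=0$.

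Consequently Proposition \ref{const} applies and yields $h.\mu\in\hca_{q,n}$ together with the assertion that $\left(G_{h.\mu}/K_{h.\mu},g_{h.\mu}\right)$ is equivariantly isometric to $\left(G_\mu/K_\mu,g_{\la h_n\cdot,h_n\cdot\ra}\right)$. To finish, I would observe that orthogonality of $h_n$ forces $\la h_n\cdot,h_n\cdot\ra=\ip$, so $g_{\la h_n\cdot,h_n\cdot\ra}=g_\mu$ and the claimed equivariant isometry between $h.\mu$ and $\mu$ follows. The fact that $\Gl_q(\RR)\times\Or(n)$ genuinely acts on $\hca_{q,n}$ — i.e. is closed under products and inverses — is immediate, since it is a subgroup of $\Gl_{q+n}(\RR)$ acting via (\ref{action}) and each of its elements has just been shown to preserve $\hca_{q,n}$.

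There is no real obstacle here: the corollary is a direct consequence of Proposition \ref{const}, and the only point worth isolating is that the two nontrivial-looking conditions (\ref{adkh}) and (\ref{adkh2}) degenerate completely when $A=0$ and $h_n$ is orthogonal. If anything deserves emphasis, it is that block-diagonal matrices with orthogonal lower block form precisely the subgroup of (\ref{formh}) that additionally preserves the inner product $\ip$ on $\RR^n$, which is why one lands back at $g_\mu$ itself rather than at a rescaled metric as in the general statement of Proposition \ref{const}.
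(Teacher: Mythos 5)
Your proof is correct and follows exactly the route the paper intends: the corollary is stated without separate proof precisely because it is the specialization of Proposition \ref{const} to $A=0$ and $h_n\in\Or(n)$, where (\ref{adkh}) and (\ref{adkh2}) trivialize and $\la h_n\cdot,h_n\cdot\ra=\ip$. Nothing is missing.
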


Let us now analyze condition (\ref{adkh}) more in detail.  The isotropy representation $\ad_\mu:\RR^q\longrightarrow\End(\RR^n)$ of a homogeneous space $\mu\in\hca_{q,n}$, which is faithful by (h4) and unitary by (h3), can be decomposed into isotypical components as
$$
\RR^n=V_1^{n_1}\oplus...\oplus V_r^{n_r},
$$
where $V_i,V_j$ are non-equivalent irreducible representations of the Lie algebra $(\RR^q,\mu)$ for all $i\ne j$, and $V_i^{n_i}\simeq V_i\oplus\dots\oplus V_i$ ($n_i$ times).  The space of intertwining operators is therefore given by
\begin{equation}\label{end}
\End_{\ad_\mu}(\RR^n)=\glg_{n_1}(\FF_1)\oplus...\oplus\glg_{n_r}(\FF_r),
\end{equation}
where $\FF_i=\RR,\CC$ or $\HH$ depending on the type of $V_i$.  Recall that the possible types of a real representation are {\it real}, {\it complex} or {\it quaternionic}, i.e. $\End_{\ad_\mu}(V_i)=\RR,\CC$ or $\HH$, respectively (see \cite{BrcDck}).  It follows from Proposition \ref{const} that for each $\mu\in\hca_{q,n}$, if $U_\mu$ is the subset of $\Gl_n(\RR)$ defined by
$$
U_\mu:=\{h_n\in\Gl_n(\RR):h_n^th_n\in\End_{\ad_\mu}(\RR^n)\},
$$
then $h.\mu\in\hca_{q,n}$ for any $h\in\tilde{U}_\mu$, where $\tilde{U}_\mu$ is the subset of $\Gl_{q+n}(\RR)$ given by
$$
\tilde{U}_\mu:=\left\{
\left[\begin{smallmatrix} h_q&A\\ 0&h_n
\end{smallmatrix}\right]\in\Gl_{q+n}(\RR) : h_q\in\Gl_q(\RR), \, h_n\in U_\mu, \, A \,\mbox{satisfies (\ref{adkh2})}\right\}.
$$
If we define
$$
\sym_{\FF}(m):= \{ A\in\glg_m(\FF):\overline{A^t}=A\},
$$
then by using (\ref{end}) and the polar decomposition one easily obtains that
$$
U_\mu=\Or(n)\left(U_1\times...\times U_r\right), \qquad U_i:=e^{\sym_{\FF_i}(n_i)}.
$$
Notice that $\tilde{U}_\mu.\hca_{q,n}$ is not necessarily contained in $\hca_{q,n}$, it only satisfies $\tilde{U}_\mu.\mu\subset\hca_{q,n}$, but we may consider for each faithful and unitary representation $\theta:(\RR^q,\mu)\longrightarrow\End(\RR^n)$ the subset of homogeneous spaces having $\theta$ as its isotropy representation, that is,
$$
\hca_{q,n}(\theta):=\{\mu\in\hca_{q,n}:\ad_\mu\RR^q|_{\RR^n}=\theta\}.
$$
Thus $U_\mu=U_\lambda$ for any $\mu,\lambda\in\hca_{q,n}(\theta)$, and so if we denote these subsets by $U_\theta$, then
$$
(\Gl_q(\RR)\times U_\theta).\hca_{q,n}(\theta)\subset\hca_{q,n}(\theta).
$$

\begin{proposition}\label{eqdif}
$G_\mu/K_\mu$ and $G_\lambda/K_\lambda$, $\mu,\lambda\in\hca_{q,n}$, are equivariantly diffeomorphic if and only if $\lambda\in\tilde{U}_\mu.\mu$.
\end{proposition}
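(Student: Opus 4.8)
The plan is to translate the statement entirely into the linear algebra of the variety of Lie algebras, reusing the block computation already carried out in the proof of Proposition \ref{const}. A key point throughout is that \emph{both} $\mu$ and $\lambda$ are assumed to lie in $\hca_{q,n}$, so conditions (h1)--(h4) are available for each of them, and it is the conditions for the \emph{target} that will pin down the isomorphism.

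For the implication ``$\Leftarrow$'': suppose $\lambda=h.\mu$ with $h\in\tilde U_\mu$. By the definition of $\tilde U_\mu$, the matrix $h$ has the form \eqref{formh} with $h_n\in U_\mu$ and with $A$ satisfying \eqref{adkh2}; moreover $h_n\in U_\mu$ means $h_n^th_n\in\End_{\ad_\mu}(\RR^n)$, hence $[h_n^th_n,\ad_\mu\RR^q|_{\RR^n}]=0$, which is exactly \eqref{adkh}. Thus $h$ verifies all the hypotheses of Proposition \ref{const}, and that proposition gives directly that $G_\lambda/K_\lambda=G_{h.\mu}/K_{h.\mu}$ and $G_\mu/K_\mu$ are equivariantly diffeomorphic.

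For ``$\Rightarrow$'': let $\tilde\varphi:G_\mu\to G_\lambda$ be a Lie group isomorphism with $\tilde\varphi(K_\mu)=K_\lambda$, and set $h:=\dif\tilde\varphi|_e\in\Gl_{q+n}(\RR)$. Then $h$ is a Lie algebra isomorphism $(\RR^{q+n},\mu)\to(\RR^{q+n},\lambda)$, so $\lambda=h.\mu$; and since $K_\mu,K_\lambda$ are connected with Lie algebra $\RR^q$, the equality $\tilde\varphi(K_\mu)=K_\lambda$ forces $h(\RR^q)=\RR^q$ on Lie algebras, so $h$ is block upper-triangular of the form \eqref{formh}. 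It remains to show $h\in\tilde U_\mu$, i.e. \eqref{adkh2} for $A$ and $h_n\in U_\mu$. For \eqref{adkh2} I would rerun the expansion \eqref{adkh3}: writing out $\lambda(z,x)=h\mu(h^{-1}z,h^{-1}x)$ for $z\in\RR^q$, $x\in\RR^n$ using the block forms of $h$ and $h^{-1}$, and separating the $\RR^q$- and $\RR^n$-parts, condition (h1) for $\lambda$ says $\lambda(z,x)\in\RR^n$, so the $\RR^q$-component of the expansion vanishes for all $z,x$; after replacing $w=h_q^{-1}z$ this is precisely $A\,\ad_\mu w|_{\RR^n}=h_q\,\ad_\mu w|_{\RR^q}\,h_q^{-1}A$ for all $w\in\RR^q$, i.e. \eqref{adkh2}. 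Granting \eqref{adkh2}, the same expansion yields $\ad_\lambda z|_{\RR^n}=h_n\,\ad_\mu h_q^{-1}z|_{\RR^n}\,h_n^{-1}$ for all $z$. Now (h3) for $\mu$ and for $\lambda$ makes every $\ad_\mu w|_{\RR^n}$ and every $\ad_\lambda z|_{\RR^n}$ skew-symmetric with respect to $\ip$, so $h_n$ conjugates each skew-symmetric operator in $\ad_\mu\RR^q|_{\RR^n}$ to a skew-symmetric one; the elementary equivalence ``$X$ skew and $h_nXh_n^{-1}$ skew $\iff Xh_n^th_n=h_n^th_nX$'' then gives $[h_n^th_n,\ad_\mu\RR^q|_{\RR^n}]=0$, i.e. $h_n^th_n\in\End_{\ad_\mu}(\RR^n)$ and $h_n\in U_\mu$. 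Hence $h\in\tilde U_\mu$ and $\lambda=h.\mu\in\tilde U_\mu.\mu$.

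I expect the only real (and still modest) obstacle to be the bookkeeping in the ``$\Rightarrow$'' direction: organizing the block expansion so it is transparent that conditions (h1) and (h3) \emph{for $\lambda$} are what force $h$ into $\tilde U_\mu$, and checking the small linear-algebra equivalence between ``$h_n$ conjugates skew to skew'' and ``$h_n^th_n$ commutes with the isotropy representation''. No idea beyond Proposition \ref{const} seems to be required.
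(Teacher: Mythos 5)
Your proposal is correct and follows essentially the same route as the paper: the converse direction is delegated to Proposition \ref{const}, and the forward direction extracts $h=\dif\tilde{\vp}|_e$, uses (h1) for $\lambda$ together with the expansion (\ref{adkh3}) to get (\ref{adkh2}), and then uses the conjugation formula $\ad_\lambda z|_{\RR^n}=h_n\ad_\mu h_q^{-1}z|_{\RR^n}h_n^{-1}$ with (h3) to deduce (\ref{adkh}). Your explicit spelling-out of the equivalence between ``$h_n$ conjugates skew to skew'' and ``$h_n^th_n$ commutes with the isotropy representation'' is just the detail the paper leaves implicit in its final step.
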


\begin{proof}
If $\vp:G_\mu/K_\mu\longrightarrow G_\lambda/K_\lambda$ is an equivariant diffeomorphism determined by an isomorphism $\tilde{\vp}:G_\mu\longrightarrow G_\lambda$ and $h:=\dif\tilde{\vp}|_{e}$, then $\lambda=h.\mu$ and $h\RR^q\subset\RR^q$ follows from the fact that $\tilde{\vp}(K_\mu)=K_\lambda$.  We now use that $\lambda(\RR^q,\RR^n)\subset\RR^n$ to obtain from (\ref{adkh3}) that $A:\RR^n\longrightarrow\RR^q$, the $\RR^q$-component of $h|_{\RR^n}$, must satisfy condition (\ref{adkh2}).  Finally, it follows from the fact that $\ad_\lambda{z}|_{\RR^n}=h_n\ad_\mu{h_q^{-1}z}|_{\RR^n}h_n^{-1}$ for all $z\in\RR^q$ and condition (h3) that $h_n$ satisfies (\ref{adkh}), which implies that $h\in\tilde{U}_\mu$.

The converse assertion is the content of Proposition \ref{const}.
\end{proof}

Summarizing, we have that
\begin{itemize}
\item The group $\Gl_q(\RR)\times\Or(n)$ acts on $\hca_{q,n}$ in such a way that all the elements in the same orbit are pairwise equivariantly isometric.

\item  For any $\mu\in\hca_{q,n}$, the subset $U_\mu.\mu\subset\hca_{q,n}$ parameterizes the set of all $G_\mu$-invariant metrics on $G_\mu/K_\mu$, where we  embed $U_\mu\hookrightarrow\Gl_{q+n}(\RR)$ in the usual way (see Remark \ref{const3}).

\item For any $\mu\in\hca_{q,n}$, the subset $\tilde{U}_\mu.\mu\subset\hca_{q,n}$ consists of those elements in $\hca_{q,n}$ which are equivariantly diffeomorphic to $G_\mu/K_\mu$.

\item The subsets $(\Gl_q(\RR)\times U_\theta).\mu$, $\mu\in\hca_{q,n}(\theta)$, are precisely the equivariant diffeomorphism classes inside $\hca_{q,n}(\theta)$.
\end{itemize}

\begin{example}\label{ex1-52}
For $\mu=\mu_{p,q,a,b,c,d,e,f}\in\hca_{1,5}$ given in Example \ref{ex1-5} we have that
$$
\ad_\mu{e_0}=\left[\begin{smallmatrix} 0&&&&\\ &0&-p&&\\ &p&0&&\\ &&&0&-q\\ &&&q&0
\end{smallmatrix}\right],
$$
and hence
$$
U_\mu=\left\{
\begin{array}{lcl}
\Or(5)\left[\begin{smallmatrix} r&&\\ &\Gl_1(\CC)&\\ &&\Gl_1(\CC)\end{smallmatrix}\right] =\Or(5)\left[\begin{smallmatrix} r_1&&\\ &r_2I_{2\times 2}&\\ &&r_3I_{2\times 2}\end{smallmatrix}\right], && 0<p<q; \\ \\
\Or(5)\left[\begin{smallmatrix} \Gl_3(\RR)&\\ &\Gl_1(\CC)&\end{smallmatrix}\right] =\Or(5)\left[\begin{smallmatrix} e^{\sym(3)}\\ &rI_{2\times 2}&\end{smallmatrix}\right], && 0=p<q; \\ \\
\Or(5)\left[\begin{smallmatrix} r&\\ &\Gl_2(\CC)\end{smallmatrix}\right] =\left[\begin{smallmatrix} r&\\ &e^{\im\ug(2)}\end{smallmatrix}\right], && 0<p=q.
\end{array}\right.
$$
If $G_\mu\simeq\SU(2)\times\SU(2)$, then the homogeneous spaces $G_\mu/K_\mu$ are all diffeomorphic to $S^3\times S^2$, but if $0\leq p\leq q$ then two different values of $p/q\in[0,1]\cap\QQ$ give rise to non-equivalent $G_\mu$-actions on $S^3\times S^2$ (see \cite[Example 6.8]{BhmWngZll}), that is, to non-equivariantly diffeomorphic homogeneous spaces.
\end{example}

\section{Curvature invariants}\label{curvinv}

In this section, we describe a quite intriguing necessary and sufficient condition for two homogeneous spaces $\mu,\lambda\in\hca_{q,n}$ be isometric.  The condition is in the spirit of invariant theory and was proved by I.M. Singer in \cite{Sng} (see \cite{NclTrc} and \cite{PrfTrcVnh} for further information).  These results are being used in some work in progress on homogeneous Ricci solitons.

Let $\nabla_\mu$ denote the Levi-Civita connection and $\Riem_\mu$ the corresponding Riemannian curvature tensor of $\mu\in\hca_{q,n}$.  Recall that any $\mu\in\hca_{q,n}$ is identified with the homogeneous space $(G_\mu/K_\mu, g_\mu)$ according to (\ref{hsmu}).  By identifying $\RR^n$ with the corresponding Killing vector fields of $G_\mu/K_\mu$, it follows that $\Riem_\mu$ is determined by its value at $eK_\mu$, the $4$-linear map given by
$$
\Riem_\mu:=\Riem(g_{\mu})(eK_\mu):\RR^n\times\RR^n\times\RR^n\times\RR^n \longrightarrow\RR.
$$
In the same way, the covariant derivative $\nabla_\mu^k\Riem_\mu$ can be viewed as a vector in $\otimes^{4+k}(\RR^n)^*$ for any $k\geq 0$ ($\nabla_\mu^0\Riem_\mu:=\Riem_\mu$), and we consider for each $\mu\in\hca_{q,n}$ the vector
$$
w_\mu:= (\Riem_\mu, \nabla_\mu\Riem_\mu, ...,\nabla_\mu^m\Riem_\mu)\in W:=\bigoplus_{k=0}^m\left(\otimes^{4+k}(\RR^n)^*\right), \quad m:=\tfrac{n(n-1)}{2}-1.
$$
If $\mu,\lambda\in\hca_{q,n}$ are isometric, then the isometry $\vp:G_\mu/K_\mu\longrightarrow G_\lambda/K_\lambda$ can be assumed to satisfy $\vp(eK_\mu)=eK_\lambda$.  Thus $h:=\dif\vp|_{eK_\mu}\in\Or(n)$ and we have that $h.\nabla_\mu^k\Riem_\mu=\nabla_\lambda^k\Riem_\lambda$ for all $k$, where the actions of $\Or(n)\subset\Gl_n(\RR)$ on the different tensorial vector spaces are the standard ones. This implies that
\begin{equation}\label{invf}
w_\lambda\in\Or(n).w_\mu, \qquad\forall\mu,\lambda\in\hca_{q,n} \quad \mbox{isometric}.
\end{equation}
Let us now take $f\in\RR[W]^{\Or(n)}$, that is, a polynomial function $f:W\longrightarrow\RR$ which is $\Or(n)$-invariant (i.e. $f(h.w)=f(w)$ for all $h\in\Or(n)$, $w\in W$).  We also denote by $f$ the function
\begin{equation}\label{invariant}
f:\hca_{q,n}\longrightarrow\RR, \qquad f(\mu):=f(w_\mu),
\end{equation}
which is also polynomial on $\mu$.  We call such an $f$ a {\it curvature invariant}, as it follows from (\ref{invf}) that $f(\mu)=f(\lambda)$ for any pair $\mu,\lambda\in\hca_{q,n}$ of isometric homogeneous spaces.  The converse assertion is a very nice and important result in homogeneous geometry proved in \cite{Sng} (see also \cite[Theorem 2.5]{NclTrc} for an alternative proof and \cite[Theorem 2.3]{PrfTrcVnh}).

\begin{theorem}\label{singer}
The following assertions are equivalent:
\begin{itemize}
\item[(i)] $\mu,\lambda\in\hca_{q,n}$ are isometric.

\item[(ii)] $f(\mu)=f(\lambda)$ for any $f\in\RR[W]^{\Or(n)}$.

\item[(iii)] $w_\lambda\in\Or(n).w_\mu$.
\end{itemize}
\end{theorem}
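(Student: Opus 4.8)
The plan is to show the non-trivial implication $(iii)\Rightarrow(i)$, since $(i)\Rightarrow(iii)$ is established in the paragraph preceding the theorem and $(ii)\Leftrightarrow(iii)$ is a standard fact from invariant theory (the $\Or(n)$-orbits in a finite-dimensional real representation are separated by invariant polynomials, as $\Or(n)$ is compact). So assume $w_\lambda = h.w_\mu$ for some $h\in\Or(n)$; replacing $\lambda$ by $h^{-1}.\lambda$ (which is again in $\hca_{q,n}$ and equivariantly isometric to $\lambda$ by Corollary \ref{const2}), we may assume $w_\mu = w_\lambda$, i.e.\ all covariant derivatives $\nabla_\mu^k\Riem_\mu = \nabla_\lambda^k\Riem_\lambda$ agree at the origin for $0\le k\le m$.

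The core is Singer's infinitesimal homogeneity argument. First I would set up the two linear isometry $\vp_0 := \mathrm{id}: (\RR^n,\ip)\to(\RR^n,\ip)$ between the tangent spaces at $eK_\mu$ and $eK_\lambda$; by construction it matches curvature and its covariant derivatives up to order $m$. The key point, going back to Singer, is that for a homogeneous space the number $m = n(n-1)/2 - 1$ is large enough: once two homogeneous metrics agree to this order at a point, the isotropy algebras generated by the curvature tensors (the "Singer invariant" stabilizes by step $m$) coincide, and one can build a local isometry by integrating. Concretely, I would invoke the Ambrose--Singer / Nomizu theory: a simply connected complete Riemannian manifold is determined by its curvature tensor and all covariant derivatives at one point together with the holonomy; for homogeneous spaces, finitely many derivatives (order $\le m$) suffice because the sequence of isotropy algebras $\kg_0\supset\kg_1\supset\cdots$ defined by $\kg_j = \{A\in\sog(n): A\cdot\nabla^i\Riem = 0,\ i\le j\}$ stabilizes at step $\le m$, and its stabilization $\kg_\infty$ is (conjugate to) the linear isotropy of any homogeneous model with that infinitesimal data. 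Thus $(G_\mu/K_\mu, g_\mu)$ and $(G_\lambda/K_\lambda, g_\lambda)$ are locally isometric near their origins via a map with differential $\vp_0$ at the origin.

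From a local isometry $F$ defined on a normal ball, sending $eK_\mu\mapsto eK_\lambda$ with $\dif F|_{eK_\mu}\in\Or(n)$, I would pass to a global one. Both spaces are complete (being homogeneous) and simply connected (since $G_\mu$, $G_\lambda$ are simply connected and $K_\mu$, $K_\lambda$ connected, so by the homotopy sequence $G_\mu/K_\mu$ and $G_\lambda/K_\lambda$ are simply connected — this is recorded in Section \ref{hm}). Hence the standard analytic-continuation theorem for local isometries between complete simply connected Riemannian manifolds (Kobayashi--Nomizu, or the Cartan--Ambrose--Hicks theorem) upgrades $F$ to a global isometry $\vp: G_\mu/K_\mu\to G_\lambda/K_\lambda$. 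This $\vp$ witnesses $(i)$.

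The main obstacle is the middle step: justifying that $m = n(n-1)/2 - 1$ covariant derivatives of the curvature at one point determine the homogeneous space up to local isometry. This is precisely Singer's theorem, so in the write-up I would either cite \cite{Sng} (and the alternative proofs in \cite{NclTrc}, \cite{PrfTrcVnh}) for this infinitesimal statement and only supply the passage from infinitesimal data to a global isometry, or, if a self-contained argument is wanted, recall the stabilization of the Singer invariant: since each $\kg_j\subset\sog(n)$ and $\dim\sog(n) = n(n-1)/2$, a strictly decreasing chain $\kg_0\supsetneq\kg_1\supsetneq\cdots$ can drop at most $n(n-1)/2$ times, so it is constant from some index $\le n(n-1)/2 - 1 = m$ onward — which is exactly why $w_\mu$ only needs to record derivatives up to order $m$. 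Everything else (the reduction via Corollary \ref{const2}, completeness and simple connectedness from Section \ref{hm}, the compactness of $\Or(n)$ for $(ii)\Leftrightarrow(iii)$, and the Cartan--Ambrose--Hicks globalization) is standard and only needs to be assembled in the right order.
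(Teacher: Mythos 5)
Your proposal is correct and matches the paper's treatment: the paper gives no independent proof of this theorem, but simply cites \cite{Sng} (with \cite{NclTrc} and \cite{PrfTrcVnh} as alternatives) for the hard implication and notes, exactly as you do, that (ii)$\Leftrightarrow$(iii) follows from the fact that $\RR[W]^{\Or(n)}$ separates $\Or(n)$-orbits since $\Or(n)$ is compact, while (i)$\Rightarrow$(iii) is the displayed observation (\ref{invf}) preceding the statement. Your additional outline of Singer's stabilization argument and the Cartan--Ambrose--Hicks globalization (using completeness and the simple connectedness of $G_\mu/K_\mu$ recorded in Section \ref{hm}) is a faithful sketch of what the cited references actually prove, so nothing is missing.
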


The equivalence between (ii) and (iii) actually follows from a strong result in invariant theory: $\RR[W]^{\Or(n)}$ separates orbits as $\Or(n)$ is compact.  Since $\Or(n)$ is a reductive group, another classical theorem from invariant theory states that $\RR[W]^{\Or(n)}$ is finitely generated as an algebra, say
$$
\RR[W]^{\Or(n)}=\left\la f_1,...,f_r\right\ra.
$$
By considering $F:=(f_1,...,f_r):\hca_{q,n}\longrightarrow\RR^r$, we conclude from Theorem \ref{singer} that

\begin{quote}
$\mu,\lambda\in\hca_{q,n}$ are isometric if and only if $F(\mu)=F(\lambda)$.
\end{quote}

In other words, the isometry classes in $\hca_{q,n}$ are precisely the level sets of a polynomial function $F:\hca_{q,n}\longrightarrow\RR^r$.

\begin{example}
A family of curvature invariants whose computation is usually doable is $f_k(\mu):=\tr{\Ricci_\mu^k}$, where $\Ricci_\mu$ is the Ricci operator of $\mu\in\hca_{q,n}$.  Recall that the values of $f_1,...f_n$ at $\mu$ actually determines the set of Ricci eigenvalues (counting multiplicities).  As a homogeneous manifold is flat if and only if it is Ricci flat (see \cite{AlkKml}), the flat elements in $\hca_{q,n}$ can be characterized by a single polynomial equation: $f_2(\mu)=0$.
\end{example}

The setting described in this section motivates the definition of a distance on $\hca_{q,n}$ given by
$$
d(\mu,\lambda):=d_W(\Or(n).w_\mu,\Or(n).w_\lambda) =\min\{d_W(h.w_\mu,h'.w_\lambda):h,h'\in\Or(n)\},
$$
where $d_W$ is the euclidean distance in $W$.  We may also consider the Hausdorff distance between compact subsets of $W$, but this will be equivalent since the subsets involved are orbits by a group of isometries of $W$.  It follows from Theorem \ref{singer} that $d(\mu,\lambda)=0$ if and only if $\mu$ and $\lambda$ are isometric as homogeneous manifolds.  If $\mu_k\to\lambda$ in $V_{q+n}$, as $k\to\infty$, then $d(\mu_k,\lambda)\to 0$, and hence the topology of the metric space $(\hca_{q,n},d)$ is weaker than the induced on $\hca_{q,n}$ by the usual vector space topology of $V_{q+n}$.  We note that these topologies are not equivalent, it may for instance happen that $\Or(n).w_{\mu_k}\to\{ 0\}=\Or(n).w_0$, and nevertheless $\mu_k\to\lambda\ne 0$ (e.g. take the sequence $\mu_k:=\mu_{1+1/k,1-1/k,0}$ in Example \ref{ex0-3} of nonflat metrics on $E(2)$ converging to the flat manifold $\lambda:=\mu_{1,1,0}$).

\section{Convergence}\label{convsec}

In this section, all manifolds are assumed to be connected and all Riemannian metrics to be smooth (i.e. $C^\infty$) and complete.

\subsection{General case} Let $M$ be a differentiable manifold.  A sequence $g_k$ of Riemannian metrics on $M$ is said to {\it converge} (smoothly) to a Riemannian metric $g$ as $k\to\infty$ (denoted by $g_k\to g$) if for all compact subsets $K\subset M$, the tensor $g_k-g$ and its covariant derivatives of all orders (with respect to any fixed background connection) each converge uniformly to zero on $K$.

\begin{remark}\label{convr}
By using charts with relatively compact domains which cover $M$, convergence $g_k\to g$ can be rephrased as follows: the partial derivative $\partial^\alpha(g_k)_{ij}$ of the coordinates $(g_k)_{ij}$ of the metrics converge to $\partial^\alpha g_{ij}$ uniformly, as $k\to\infty$, for every chart and every multiindex $\alpha$.
\end{remark}

A {\it pointed Riemannian manifold} $(M,g,p)$ is simply a Riemannian manifold $(M,g)$ with a point $p\in M$, which plays the role of basepoint or point of reference.  Two $(M,g,p)$, $(M',g',p')$ are called {\it isometric} if there is an isometry $\vp:(M,g)\longrightarrow (M',g')$ such that $\vp(p)=p'$.

\begin{definition}\label{pointed}
(Smooth pointed or Cheeger-Gromov topology)  A sequence $(M_k,g_k,p_k)$ of pointed Riemannian manifolds is said to {\it converge in the pointed sense} to a pointed Riemannian manifold $(M,g,p)$ as $k\to\infty$ if there exist
\begin{itemize}
\item[ ] a sequence of open subsets $\Omega_k\subset M$ containing $p$, so that any compact subset of $M$ eventually lies in all $\Omega_k$ for sufficiently large $k$;

\item[ ] a sequence of smooth maps $\phi_k:\Omega_k\longrightarrow M_k$ which are diffeomorphisms onto open subsets $\Lambda_k\subset M_k$ (i.e. {\it embeddings}) and satisfy $\phi_k(p)=p_k$ for all $k$;
\end{itemize}
such that $\phi_k^*g_k\to g$ smoothly as $k\to\infty$ on $M$ (or more precisely, on every compact subset of $M$).
\end{definition}

Some remarks on this topology may be in order (see e.g. \cite[Chapter 8$^+$]{Grm}, \cite[Chapter 10]{Ptr}, \cite[Chapter 4]{libro}, \cite[Section 7.1]{Tpp} and \cite[Chapter 9]{AndHpp} for further information).  Assume that $(M_k,g_k,p_k)$ converges in the pointed sense to $(M,g,p)$ as $k\to\infty$.

\begin{itemize}
\item If $M$ is compact then $\phi_k:M\longrightarrow M_k$ is a diffeomorphism for all $k$ (as $\phi_k(M)$ is open and closed in $M$).  Thus the basepoints play no role in the pointed convergence, which in this case just means that $(M_k,g_k)$ converges smoothly to $(M,g)$ up to pullback by diffeomorphisms.

\item On the contrary, the example of the Rosenau metrics (i.e. longer and longer cigars converging to a cylinder, cf. \cite[9.2.2]{AndHpp}) shows that $M$ can be noncompact and non-simply connected, even when all the manifolds $M_k$ are compact and simply connected.

\item Also, the location of the basepoints can be crucially involved in the convergence when $M$ is noncompact: if $g_1$ is a metric on $\RR^n$ which coincides with the flat metric $g_0$ outside a compact set, then $(\RR^n,g_1,p_k)\to(\RR^n,g_0,0)$ if $p_k\to\infty$, but $(\RR^n,g_1,p_k)\to(\RR^n,g_1,p)$ if $p_k=p$ for all $k$ (see also the first example in \cite[Section 7.1]{Tpp} and \cite[Figure 9.3]{AndHpp}).

\item It is easy to check that the distances satisfy
$$
d_{g_k}(\phi_k(q),\phi_k(q')) \to d_g(q,q'), \qquad\forall q,q'\in M,
$$
from which it follows that for any $r>0$ the metric balls satisfy $B_g(p,r)\subset\Omega_k$ and $B_{g_k}(p_k,r)\subset\Lambda_k$ for sufficiently large $k$ (recall that metric balls are compact due to completeness).

\item The limit $(M,g,p)$ is unique up to isometry.

\item The following two conditions must hold;
\begin{itemize}
\item {\it bounded geometry}: for all $r>0$ and $j\in\ZZ_{\geq 0}$,
\begin{equation}\label{cotacurv}
\sup_k\;\sup_{B_{g_k}(p_k,r)} \|\nabla_{g_k}^j\Riem(g_k)\|_{g_k}<\infty,
\end{equation}
where $\nabla_{g_k}$ is the Levi-Civita connection and $\|\cdot\|_{g_k}$ denotes the corresponding norm in the spaces of sections of the different tensor bundles over $M_k$;
\item[ ]
\item {\it non-collapsing}:
\begin{equation}\label{cotainj}
\inf_k \inj(M_k,g_k,p_k)>0,
\end{equation}
where $\inj(M_k,g_k,p_k)$ is the injectivity radius of $(M_k,g_k)$ at $p_k$.
\end{itemize}
\end{itemize}

Recall that the {\it injectivity radius} of $(M,g)$ at $p$ is the largest $\epsilon$ for which the exponential map $\exp_p:B(0,\epsilon)\longrightarrow B_g(p,\epsilon)$ is a diffeomorphism, where $B(0,\epsilon)=\{ x\in\tang_pM:g_p(x,x)<\epsilon^2\}$.  The following result is considered the fundamental theorem of convergence theory of Riemannian manifolds.

\begin{theorem}\label{pcomp} (Compactness)
Let $(M_k,g_k,p_k)$ be a sequence of complete pointed Riemannian manifolds of dimension $n$ satisfying {\rm (\ref{cotacurv})} and {\rm (\ref{cotainj})}.  Then there exists a subsequence of $(M_k,g_k,p_k)$ which converges to a complete pointed Riemannian manifold $(M,g,p)$ of dimension $n$ in the pointed topology.
\end{theorem}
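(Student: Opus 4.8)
This is the classical Cheeger--Gromov--Hamilton compactness theorem in its smooth (bounded-geometry) form, and I would prove it along the now-standard lines: promote the pointwise curvature and basepoint-injectivity bounds to uniform \emph{local} control of the metrics in good coordinate charts, extract convergent subsequences of the charts, metric coefficients and transition maps by an Arzel\`a--Ascoli/diagonal argument, and glue the limits into a smooth pointed limit manifold. The first step is to upgrade hypothesis (\ref{cotainj}), which only controls $\inj$ at the basepoint, to a uniform positive lower bound for $\inj(M_k,g_k,q)$ over all $q\in B_{g_k}(p_k,r)$, for each fixed $r>0$. Here one uses the two-sided curvature bound (\ref{cotacurv}) with $j=0$: G\"unther's volume comparison gives a lower bound for $\operatorname{vol}B_{g_k}(p_k,i_0)$ with $i_0:=\inf_k\inj(M_k,g_k,p_k)>0$; since $B_{g_k}(q,r+i_0)\supset B_{g_k}(p_k,i_0)$ and Bishop--Gromov (using the Ricci lower bound implied by (\ref{cotacurv})) compares $\operatorname{vol}B_{g_k}(q,\rho)$ with $\operatorname{vol}B_{g_k}(q,r+i_0)$, one obtains a uniform lower volume bound for small balls around every such $q$; Cheeger's injectivity-radius estimate (or Cheeger--Gromov--Taylor) then converts this, together with the curvature bound, into $\inj(M_k,g_k,q)\ge i_1(n,r)>0$.

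With injectivity radii and all covariant derivatives of $\Riem(g_k)$ uniformly bounded on $B_{g_k}(p_k,r)$, I would next build harmonic coordinate charts of a definite radius around each such point on which $\tfrac12\delta\le g_k\le 2\delta$ and every partial derivative $\partial^\alpha(g_k)_{ij}$ is bounded by a constant $C_{|\alpha|}$ independent of $k$: the existence of charts of controlled size and $C^{1,\alpha}$-norm follows from the bounds on $\inj$ and $|\Riem(g_k)|$ (Anderson, Jost--Karcher), and one bootstraps to uniform $C^\infty$ bounds using the elliptic equation satisfied by $g_k$ in harmonic coordinates together with the higher-derivative bounds in (\ref{cotacurv}). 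Fixing a point to serve as the eventual limit basepoint, one chooses inductively a countable family of such charts on each $M_k$ exhausting larger and larger balls around $p_k$, with uniformly controlled overlaps, so that the transition maps are uniformly bounded in $C^\infty$ on compact subsets of their domains. A diagonal subsequence via Arzel\`a--Ascoli then yields a subsequence along which every chart metric $(g_k)_{ij}$ and every transition map converges in $C^\infty_{loc}$; the limiting charts and transition maps define an $n$-manifold $M$, the limiting metric tensors glue to a smooth metric $g$, and the $p_k$ converge to a point $p\in M$. The embeddings $\phi_k$ required by Definition \ref{pointed} are obtained by composing a fixed chart on $M$ near $p$ with the corresponding chart on $M_k$, extended over the exhausting family; by construction $\phi_k(p)=p_k$, the $\Omega_k$ exhaust $M$, and $\phi_k^*g_k\to g$ in $C^\infty_{loc}$. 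Completeness of $(M,g)$ follows since $d_{g_k}(\phi_k(q),\phi_k(q'))\to d_g(q,q')$, so closed metric balls in $(M,g)$ are compact.

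The main obstacle is the local analysis producing \emph{uniform} $C^\infty$ control of the metrics: one must construct coordinate charts whose size and whose metric norms depend only on $n$, $r$ and the bounds in (\ref{cotacurv})--(\ref{cotainj}), and the passage from the $C^{1,\alpha}$ estimate that the geometric hypotheses directly yield in harmonic coordinates to full $C^\infty$ estimates requires the elliptic bootstrap fed by the bounds on $\nabla^j\Riem(g_k)$. The step upgrading the injectivity radius from the basepoint to a whole ball is also genuinely delicate and uses the two-sidedness of the curvature bound in an essential way. By contrast, the diagonal extraction and the gluing of charts into the limit manifold are routine, if bookkeeping-heavy; since this result is standard, in the paper I would expect it to be quoted with a reference to the sources listed after Definition \ref{pointed} rather than proved in detail.
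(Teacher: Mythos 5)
Your outline is a correct and faithful sketch of the standard Cheeger--Gromov--Hamilton argument, and the paper itself offers no proof of Theorem \ref{pcomp}: it simply cites \cite[10.3-10.4]{Ptr}, \cite[Chapter 4]{libro} and \cite[2.3]{Hml}, exactly as you anticipated in your closing remark. The steps you describe (upgrading the basepoint injectivity bound to a uniform one on balls, uniform $C^\infty$ control in harmonic coordinates, and diagonal extraction with gluing) are precisely the ones carried out in those references.
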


A proof of this theorem can be found in \cite[10.3-10.4]{Ptr}, \cite[Chapter 4]{libro} and \cite[2.3]{Hml}, and its origins can be traced back to ideas of Gromov \cite{Grm} and Cheeger.  We note that the finiteness of the number of diffeomorphism classes follows on any subset of compact Riemannian manifolds where a compactness (or precompactness) theorem can be applied (recall that a sequence of pairwise non-diffeomorphic manifolds can never subconverge to a compact limit).

\subsection{Homogeneous case}  If $g_k$ is a sequence of homogeneous metrics on a differentiable manifold $M$ such that $g_k$ smoothly converges to a metric $g$, then $g$ is also homogeneous.  Indeed, given $p,q\in M$ there exists for each $k$ a $g_k$-isometry $\vp_k$ such that $\vp_k(p)=q$, and it follows from $g_k\to g$ that the set $\{\vp_k\}$ is locally uniformly bounded and equicontinuous.  Hence, by the Arzela-Ascoli Theorem, a subsequence converges locally uniformly to a continuous map $\vp:M\longrightarrow M$ which is automatically an isometry of $(M,g)$ as $\vp$ preserves its Riemannian distance.  Although the set of all isometry classes of metrics on a given noncompact $M$ endowed with the quotient smooth topology is not Hausdorff, it is proved in \cite[Sections 6.1,6.2]{Hbr} that, on the contrary, the subset of those classes which are homogeneous is so, by applying an Arzela-Ascoli argument as above.

We are interested here in pointed convergence of homogeneous manifolds.  Special features for this case are hard to find in the literature.  For a strong use of the pointed topology of compact homogeneous manifolds we refer to the proofs of Theorems 1.1 and 2.1 in \cite{BhmWngZll}.

A few comments are in order at this point.  Let us assume that all $(M,g_k)$ are homogeneous and that $(M_k,g_k,p_k)$ converges in the pointed sense to $(M,g,p)$ as $k\to\infty$.

\begin{itemize}
\item Two pointed homogeneous manifolds are isometric if and only if they are isometric in the usual sense.

\item The limit $(M,g)$ is homogeneous.  Indeed, given $q\in M$, we can assume that $p,q\in\Omega_k$ for all $k$ and define $f_k:=\phi_k^{-1}\circ h_k\circ\phi_k$, where $h_k\in\Iso(M_k,g_k)$ satisfies $h_k(p_k)=\phi_k(q)$.  Thus $f_k(p)=q$ for all $k$ and by an Arzela-Ascoli argument together with a diagonal procedure one gets a limit $f:M\longrightarrow M$ with $f(p)=q$, which automatically satisfies $f\in\Iso(M,g)$ by using that $d_{g_k}(\phi_k(a),\phi_k(b))\to d_g(a,b)$ for all $a,b\in\Omega_k$.

\item The location of the basepoints $p_k$ and $p$ play no role in the pointed convergence, in the sense that we can change all of them by any other sequence $q_k\in M_k$ and $q\in M$ and use homogeneity.  However, unlike the compact case, $M$ being nonhomeomorphic to $M_k$ for all $k$ is a possible behavior (e.g. a sequence of expanding spheres converges to the plane in the pointed topology).

\item It may also happen in the homogeneous case that all $M_k$ are simply connected but $M$ is not.  Take for instance the sequence $g_k$ of left-invariant metrics on $S^3$ obtained by scaling times $k$ the round metric on the orthogonal complement of any fixed direction $X\in\sug(2)$.  It is not very hard to check that conditions (\ref{cotacurv}) and (\ref{cotainj}) hold for $(S^3,g_k)$, and thus there must be a subsequence converging to a homogeneous manifold $(M,g)$ in the pointed sense by the compactness theorem, which is easily seen to be flat.  Since $\gamma(t)=e^{tX}$ is a closed geodesic of $(S^3,g_k)$ having the same length for all $k$, it follows that $(M,g)$ must have a closed geodesic as well and so $M$ can not be simply connected (see Example \ref{berger} for a more detailed treatment of this example, where it is proved that the pointed limit is indeed $S^1\times\RR^2$).  The manifolds $(S^3,g_k)$ are called {\it Berger spheres} in the literature, and the sequence $(S^3,\tfrac{1}{k}g_k)$ is a famous example of collapsing (toward $S^2$) with bounded curvature.
\end{itemize}

There are two other notions of convergence (infinitesimal and local) which naturally arise in studying the space of homogeneous manifolds and where the topology of the manifolds is much less involved.

\begin{definition}\label{infconv} (infinitesimal) A sequence $(M_k,g_k)$ of homogeneous manifolds is said to {\it infinitesimally converge} to a homogeneous manifold $(M,g)$ as $k\to\infty$ if there exist
\begin{itemize}
\item[ ] a sequence of open subsets $\Omega_k\subset M$ containing a point $p\in M$;

\item[ ] a sequence of embeddings $\phi_k:\Omega_k\longrightarrow M_k$;
\end{itemize}
such that $\phi_k^*g_k\to g$ smoothly as $k\to\infty$ {\it at} $p$, in the sense that for any $\epsilon>0$, there exists $k_0=k_0(\epsilon)$ such that for $k\geq k_0$,
$$
\sup_{\Omega_k} \|\nabla_g^j(\phi_k^*g_k-g)\|_g<\epsilon, \qquad\forall j\in\ZZ_{\geq 0}.
$$
\end{definition}

As in the homogeneous case one only needs to control covariant derivatives up to a finite order (see Section \ref{curvinv}), it is enough for the infinitesimal convergence the existence of a $k_0(\epsilon,j)$ satisfying the required property for each fixed order $j$.  We also note that the point $p$ can be changed by any other point in $M$ due to homogeneity. The infinitesimal convergence of homogeneous manifolds is somewhat weak, notice that it does not require any condition on the size of the $\Omega_k$'s and so actually only the germs of the metrics at $p$ are involved.  The injectivity radius may therefore go to zero and it is even possible that all manifolds $M_k$, $M$ be pairwise non-homeomorphic, as Example \ref{AW3} shows.

\begin{definition}\label{locconv} (local) A sequence $(M_k,g_k)$ of homogeneous manifolds is said to {\it locally converge} to a homogeneous manifold $(M,g)$
as $k\to\infty$ if there exist
\begin{itemize}
\item[ ] a nonempty open subset $\Omega\subset M$;

\item[ ] a sequence of embeddings $\phi_k:\Omega\longrightarrow M_k$;
\end{itemize}
such that $\phi_k^*g_k\to g$ smoothly as $k\to\infty$ on $\Omega$.
\end{definition}

Notice that the open subset $\Omega$ can be assumed to contain any point $p\in M$ by using homogeneity.  It follows at once from the definitions that the three notions of convergence of homogeneous manifolds are related by

\begin{center}
pointed $\quad\Rightarrow\quad$ local $\quad\Rightarrow\quad$ infinitesimal.
\end{center}

Actually, the only difference between these three definitions of convergence lies in the size of the open subsets in the sequence $\Omega_k\subset M$:

\begin{itemize}
\item[ ] {\it infinitesimal}: no condition, $\Omega_k$ can be arbitrarily small (e.g. $\bigcap\Omega_k=\{ p\}$).

\item[ ] {\it local}: $\Omega_k$ stabilizes, i.e. $\Omega_k\supset\Omega\ne\emptyset$ for sufficiently large $k$.

\item[ ] {\it pointed}: $\Omega_k$ exhausts $M$, i.e. it eventually contains any given compact subset of $M$.
\end{itemize}

Both converse assertions are false: non-local infinitesimal convergence and non-pointed local convergence can be shown to occur (see Examples \ref{AW3} and \ref{berger}).

\subsection{Algebraic convergence}  Our aim in what follows is to study until what extent is the algebraic side of a homogeneous manifold involved in convergence issues.  In Section \ref{varhs}, we have defined a subset $\hca_{q,n}$ of the variety of Lie algebras which parameterizes the set of all $n$-dimensional simply connected homogeneous spaces with $q$-dimensional isotropy.  The space $\hca_{q,n}$ inherits the usual vector space topology from $V_{q+n}$, and a first natural question therefore arises: what kind of convergence of Riemannian manifolds this topology corresponds to?

Before starting with a rather technical matter, let us point out some useful facts:

\begin{itemize}
\item As for the other notions of convergence, a quick inspection of the examples in Section \ref{varhs} shows that both the topology and the Lie structure may also drastically change in the limit for the usual convergence of brackets (e.g. in Example \ref{ex0-3}, $\mu_{1,1/k,1/k}$ is a sequence of metrics on the simple Lie group $\SU(2)=S^3$ that converges to $\mu_{1,0,0}$, a metric on the Heisenberg Lie group, which is nilpotent and diffeomorphic to $\RR^3$).

\item For any $\mu\in\hca_{q,n}$ we can define a sequence $\mu_k\in\hca_{q,n}$ by $\mu_k|_{\RR^q\times\RR^{q+n}}:=\mu$, $\mu_k|_{\RR^n\times\RR^{n}}:=\tfrac{1}{k}\mu$, which converges to a flat element $\lambda\in\hca_{q,n}$ (recall that $\lambda$ is of the form $(K\ltimes\RR^n)/K$ for some compact subgroup $K\subset\Or(n))$.

\item Since in the homogeneous case it is enough to control the curvature tensors and their covariant derivatives at a single point, and since they all depend continuously on $\mu$ (see Section \ref{curvinv}), it follows that the usual convergence $\mu_k\to\lambda$ implies that the sequence $\left(G_{\mu_k}/K_{\mu_k},g_{\mu_k}\right)$ has bounded geometry (see (\ref{cotacurv})).

\item On the other hand, if $\mu_k$ locally converges to $\lambda$ (see Definition \ref{locconv}), then the sequence satisfies the non-collapsing condition  (see (\ref{cotainj})).  But under local convergence, bounded geometry also follows easily.  We therefore conclude from the compactness theorem that any locally convergent sequence $\mu_k$ must have a subsequence converging to a homogeneous manifold in the pointed topology.

\item Pointed or local subconvergence may however not follow from just the usual convergence of Lie brackets $\mu_k\to\lambda$, as Example \ref{AW3} shows.
\end{itemize}

\begin{example}\label{AW3}
Let $\mu_{p.q}$ denote the Lie bracket $\mu_{p,q,1,1,1,1}$ from Example \ref{AW}.  We consider the sequence of Aloff-Wallach spaces $\mu_k:=\mu_{1,1+1/k}$, which converges to $\mu_{1,1}$ in $\hca_{1,7}$, as $k\to\infty$.  However, the sequence $(W_{1,1+1/k},g_{\mu_k})=\left(G_{\mu_k}/K_{\mu_k},g_{\mu_k}\right)$ is certainly not converging in the pointed topology to $(W_{1,1},g_{\mu_{1,1}})$ since the manifolds $W_{1,1+1/k}=W_{k,k+1}$ are pairwise nonhomeomorphic (see Example \ref{AW2}) and $W_{1,1}$ is compact.  Since pointed convergence is not possible for any subsequence, we conclude again from the compactness theorem that $\inf\limits_k\inj(W_{1,1+1/k},g_{\mu_k})=0$ (recall that condition (\ref{cotacurv}) holds by the fact that $\mu_k\to\mu_{1,1}$), and so $\mu_k$ does not locally converge to $\mu_{1,1}$ either.
\end{example}

Let $\left(G_\mu/K_\mu,g_\mu\right)$ be the homogeneous space associated to $\mu\in\hca_{q,n}$, as in (\ref{hsmu}).

\begin{definition}\label{lieinj}
The {\it Lie injectivity radius} of $\left(G_\mu/K_\mu,g_\mu\right)$ is the largest $r_\mu>0$ such that
$$
\psi_\mu:=\pi_\mu\circ\exp_\mu: B(0,r_\mu)\longrightarrow G_\mu/K_\mu,
$$
is a diffeomorphism onto its image, where $\exp_\mu:\RR^{q+n}\longrightarrow G_{\mu}$ is the Lie exponential map, $\pi_\mu:G_\mu\longrightarrow G_\mu/K_\mu$ is the usual quotient map and $B(0,r_\mu)$ denotes the euclidean ball of radius $r_\mu$ in $\RR^n$.
\end{definition}

In other words, $B(0,r_\mu)$ is the largest ball where the canonical coordinates $\psi_\mu$ are defined.
\begin{remark}
The Lie injectivity radius can of course be defined for a homogeneous space $(G/K,g_{\ip})$ in its classical presentation, say with $\Ad(K)$-invariant decomposition $\ggo=\kg\oplus\pg$ (see Section \ref{hm}): just use balls in $(\pg,\ip)$.  Notice that the Lie injectivity radius depends on both the Lie theoretical data of $G/K$ and the Riemannian metric $g_{\ip}$.
\end{remark}

Every $\mu\in\hca_{q,n}$ uniquely determines a metric on a neighborhood of $0\in\RR^n$ as follows.  By setting $U_\mu:=\psi_\mu(B(0,r_\mu))$, we can associate to $\mu$ a metric $\tilde{g}_\mu$ on $B(0,r_\mu)$ given by
\begin{equation}\label{defgmu}
\tilde{g}_\mu:=\psi_\mu^*\left(g_\mu|_{U_\mu}\right).
\end{equation}

The metric $\tilde{g}_\mu$ on $B(0,r_\mu)$ does not depend on $G_\mu$, we can actually take any Lie group $G_\mu$ with Lie algebra $(\RR^{q+n},\mu)$, not necessarily simply connected, as long as the connected Lie subgroup $K_\mu$ with Lie algebra $\RR^q$ be closed in $G_\mu$.  Moreover, what $\tilde{g}_\mu$ really represents is a locally homogeneous structure, which happens to depend only on $\mu\in\hca_{q,n}$.  This will become quite clear in Proposition \ref{gmucc} below.

It will be useful to have an expression for the metric $\tilde{g}_\mu$ in terms of the canonical global chart $(x_1,...,x_n)$ of $B(0,r_\mu)\subset\RR^n$.  For a multiindex $\alpha=(\alpha_1,...,\alpha_n)$, we denote by $x^\alpha$ the monomial $x_1^{\alpha_1}...x_n^{\alpha_n}$, where $x=(x_1,...,x_n)\in\RR^n$ and $|\alpha|:=\alpha_1+...+\alpha_n$.  By `polynomial on $\mu$' we will always mean polynomial on the coordinates $\mu_{ij}^k$'s of $\mu$ defined by
$$
\mu(e_i,e_j)=\sum_{k=1}^{q+n}\mu_{ij}^ke_k,
$$
where $\{ e_i\}$ is the canonical basis of $\RR^{q+n}$.

\begin{proposition}\label{gmucc}
For each $\mu\in\hca_{q,n}$, the coordinate $(\tilde{g}_\mu)_{ij}$ of the metric $\tilde{g}_\mu$ is a real analytic function on $x$,
$$
(\tilde{g}_\mu)_{ij}(x)=\sum_\alpha a^{ij}_\alpha(\mu)\, x^\alpha, \qquad 1\leq i,j\leq n,
$$
which converges absolutely for $x\in B(0,r_\mu)\subset\RR^n$, where $r_\mu$ is the Lie injectivity radius of $\mu$.  Each coefficient $a^{ij}_\alpha$ is a universal polynomial expression on $\mu$ homogeneous of degree $|\alpha|$, depending only on $i,j$, $\alpha$, $q$ and $n$.  The lower terms are given by
\begin{align*}
(\tilde{g}_\mu)_{ij}(x)= & \delta_{ij} -\unm\sum_{k=1}^n(\mu_{q+k,q+j}^{q+i} +\mu_{q+k,q+i}^{q+j})x_k \\ & +\sum_{k,l=1}^n\Big(\unc\sum_{s=1}^n\mu_{q+k,q+i}^{q+s}\mu_{q+l,q+j}^{q+s} +\tfrac{1}{6}\sum_{r=1}^{q+n}\mu_{q+k,r}^{q+i}\mu_{q+l,q+j}^r+\mu_{q+k,r}^{q+j}\mu_{q+l,q+i}^r\Big)\, x_kx_l  \\
& + \;\mbox{monomials of degree}\, \geq 3.
\end{align*}
\end{proposition}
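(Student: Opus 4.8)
The plan is to exploit the classical formula for a left-invariant (or more generally $G$-invariant) metric in exponential coordinates. Fix $\mu\in\hca_{q,n}$ and work on $B(0,r_\mu)\subset\RR^n$ with the chart $(x_1,\dots,x_n)$. The starting point is that the pulled-back metric $\tilde g_\mu=\psi_\mu^*(g_\mu|_{U_\mu})$ at a point $x=\sum x_ke_k\in\RR^n$ is determined by the differential of $\psi_\mu=\pi_\mu\circ\exp_\mu$. First I would compute $\dif(\exp_\mu)_x:\RR^{q+n}\to T_{\exp_\mu(x)}G_\mu$ via the standard series
$$
\dif(\exp_\mu)_x = \dif L_{\exp_\mu(x)}\circ\frac{I-e^{-\ad_\mu x}}{\ad_\mu x} = \dif L_{\exp_\mu(x)}\circ\sum_{m\geq 0}\frac{(-1)^m}{(m+1)!}(\ad_\mu x)^m,
$$
where $L_u$ denotes left translation in $G_\mu$. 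Composing with $\dif\pi_\mu$ and projecting appropriately onto the reductive complement $\RR^n$, one gets that $\dif(\psi_\mu)_x(e_i)$ corresponds, under the identification $T_{\psi_\mu(x)}G_\mu/K_\mu\equiv\RR^n$ via $\dif\tau_\mu(\exp_\mu(x))|_{eK_\mu}$, to $\operatorname{pr}_{\RR^n}\!\big(\sum_{m\geq 0}\frac{(-1)^m}{(m+1)!}(\ad_\mu x)^m e_i\big)$, where $\operatorname{pr}_{\RR^n}$ is projection along $\RR^q$. Writing this operator as $F(\ad_\mu x)$ with $F(z)=\frac{1-e^{-z}}{z}$, the defining relation (\ref{gmu}) of $g_\mu$ then gives
$$
(\tilde g_\mu)_{ij}(x)=\big\la \operatorname{pr}_{\RR^n}F(\ad_\mu x)e_i,\ \operatorname{pr}_{\RR^n}F(\ad_\mu x)e_j\big\ra .
$$

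Next I would extract the analyticity and polynomiality claims from this closed form. Since $\ad_\mu x=\sum_k x_k\ad_\mu e_k$ depends linearly on $x$ with coefficients that are linear in the structure constants $\mu_{ij}^k$, each term $(\ad_\mu x)^m e_i$ is a vector whose entries are homogeneous polynomials of degree $m$ in $x$ and of degree $m$ in $\mu$. Therefore $\operatorname{pr}_{\RR^n}F(\ad_\mu x)e_i=\sum_{m\geq 0}\frac{(-1)^m}{(m+1)!}\operatorname{pr}_{\RR^n}(\ad_\mu x)^m e_i$ is an everywhere-convergent power series in $x$ (entire, in fact, since $F$ is entire), and multiplying two such series and using bilinearity of $\ip$ shows $(\tilde g_\mu)_{ij}(x)=\sum_\alpha a^{ij}_\alpha(\mu)x^\alpha$ with $a^{ij}_\alpha$ a universal polynomial in the $\mu_{ij}^k$, homogeneous of degree $|\alpha|$, depending only on $i,j,\alpha,q,n$. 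Absolute convergence on all of $\RR^n$ is automatic from the entire-function estimate $|F^{(m)}|$-type bounds; the restriction to $B(0,r_\mu)$ in the statement is just because that is where $\psi_\mu$ is a diffeomorphism and hence where the series actually represents the geometric metric $\tilde g_\mu$.

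Finally, the low-order terms: expand $F(z)=1-\tfrac12 z+\tfrac16 z^2-\cdots$, so
$$
\operatorname{pr}_{\RR^n}F(\ad_\mu x)e_i = e_i -\tfrac12\operatorname{pr}_{\RR^n}(\ad_\mu x)e_i +\tfrac16\operatorname{pr}_{\RR^n}(\ad_\mu x)^2 e_i-\cdots .
$$
Here $\ad_\mu x\, e_i=\sum_k x_k\mu(e_k,e_i)$; keeping its $\RR^n$-component picks out the structure constants $\mu_{q+k,q+i}^{q+s}$ (after the index shift $\RR^n=\la e_{q+1},\dots,e_{q+n}\ra$ used in the statement). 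Plugging into the inner product and collecting the constant, linear and quadratic terms in $x$, the constant term is $\delta_{ij}$, the linear term is $-\tfrac12(\la\operatorname{pr}(\ad_\mu x)e_i,e_j\ra+\la e_i,\operatorname{pr}(\ad_\mu x)e_j\ra)$ which gives the displayed $-\tfrac12\sum_k(\mu_{q+k,q+j}^{q+i}+\mu_{q+k,q+i}^{q+j})x_k$, and the quadratic term combines $\tfrac14\la\operatorname{pr}(\ad_\mu x)e_i,\operatorname{pr}(\ad_\mu x)e_j\ra$ (the $\unc\sum_s$ piece) with $\tfrac16(\la\operatorname{pr}(\ad_\mu x)^2 e_i,e_j\ra+\la e_i,\operatorname{pr}(\ad_\mu x)^2 e_j\ra)$, where in the latter one must keep the \emph{full} $(\ad_\mu x)^2$ before projecting, which is why the index $r$ runs over all of $1,\dots,q+n$ in the $\tfrac16$ term. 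I expect the main obstacle to be purely bookkeeping: being careful that $\operatorname{pr}_{\RR^n}(\ad_\mu x)^2 e_i$ involves $\ad_\mu x$ applied to the \emph{whole} vector $(\ad_\mu x)e_i$ (which has both $\RR^q$ and $\RR^n$ parts, since $\mu(\RR^n,\RR^n)$ need not lie in $\RR^n$), so that the sum over $r$ legitimately ranges over $1,\dots,q+n$, whereas in the single-power terms only the $\RR^n$-components survive by (h1); matching this against the stated formula, together with the correct index shifts between $\RR^{q+n}=\RR^q\oplus\RR^n$ and the labels $e_{q+k}$, is the delicate part. Everything else is a direct consequence of the Taylor expansion of $\frac{1-e^{-z}}{z}$ and the $\Ad(K_\mu)$-invariance (h3), which guarantees $\operatorname{pr}_{\RR^n}$ is the orthogonal projection compatible with $\ip$.
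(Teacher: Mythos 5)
Your proposal is correct and follows essentially the same route as the paper's proof: both start from the formula $\dif\exp_\mu|_x=\dif L_{\exp_\mu(x)}|_e\circ\frac{I-e^{-\ad_\mu x}}{\ad_\mu x}$, use $\pi_\mu\circ L_u=\tau_\mu(u)\circ\pi_\mu$ together with the definition of $g_\mu$ to reduce $(\tilde g_\mu)_{ij}(x)$ to $\la\dif\pi_\mu|_e A e_{q+i},\dif\pi_\mu|_e A e_{q+j}\ra$ with $A=\frac{I-e^{-\ad_\mu x}}{\ad_\mu x}$, and then read off polynomiality, homogeneity and the low-order terms from the power series; your observation that the index $r$ must run over all of $1,\dots,q+n$ in the $\tfrac16$ term is exactly the bookkeeping point the paper's displayed expansion encodes.
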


\begin{proof}
We start by recalling the formula for the derivative of the exponential map $\exp_\mu:\RR^{q+n}\longrightarrow G_\mu$ (see for instance \cite[2.14.3]{Vrd}), given by
\begin{equation}\label{dexp}
\dif\exp_\mu|_x=\dif L_{\exp_\mu(x)}|_e\circ \frac{I-e^{-\ad_\mu{x}}}{\ad_\mu{x}}, \qquad\forall x\in\RR^{q+n},
\end{equation}
where $\RR^{q+n}$ is identified with the tangent space at $x$, $L_u$ denotes left multiplication by $u$ on $G_\mu$ and
$$
\frac{I-e^{-\ad_\mu{x}}}{\ad_\mu{x}} := \sum_{k=0}^\infty \tfrac{(-1)^k}{(k+1)!}(\ad_\mu{x})^k = I-\unm\ad_\mu{x}+\tfrac{1}{6}(\ad_\mu{x})^2-\tfrac{1}{24}(\ad_\mu{x})^3+ ...
$$
Since the $ij$-entry of the matrix of $\ad_\mu{x}$ with respect to the basis $\{ e_1,\dots,e_{q+n}\}$ is given by $(\ad_\mu{x})_{ij}=\sum\limits_k\mu_{kj}^ix_k$, we have that
\begin{equation}\label{dexp2}
\left(\frac{I-e^{-\ad_\mu{x}}}{\ad_\mu{x}}\right)_{ij}=\sum_\alpha b_\alpha^{ij}(\mu)x^\alpha, \qquad 1\leq i,j\leq q+n,
\end{equation}
where $b_\alpha^{ij}$ is polynomial on $\mu$ of degree $|\alpha|$.  If we set $u:=\exp_\mu(x)$ and $A:=\frac{I-e^{-\ad_\mu{x}}}{\ad_\mu{x}}$ for short, then it follows from (\ref{dexp}), equality $\pi_\mu\circ L_u=\tau(u)\circ\pi_\mu$ and (\ref{gmu}) that

\begin{align*}
(\tilde{g}_\mu)_{ij}(x) &= g_\mu(\psi_\mu(x))\left(\dif\psi_\mu|_xe_{q+i}, \dif\psi_\mu|_xe_{q+j}\right) \\
&= g_\mu(uK_\mu)\left(\dif\pi_\mu|_{u}\dif \exp_\mu|_xe_{q+i},\dif\pi_\mu|_{u}\dif \exp_\mu|_xe_{q+j}\right) \\
&= g_\mu(uK_\mu)\left(\dif\pi_\mu|_{u}\dif L_{u}|_eAe_{q+i}, \dif\pi_\mu|_{u}\dif L_{u}|_eAe_{q+j}\right) \\
&= g_\mu(uK_\mu)\left(\dif\tau_\mu(u)|_{eK_\mu}\dif\pi_\mu|_e Ae_{q+i}, \dif\tau_\mu(u)|_{eK_\mu}\dif\pi_\mu|_eAe_{q+j}\right) \\
&= \la\dif\pi_\mu|_eAe_{q+i},
 \dif\pi_\mu|_eAe_{q+j}\ra.
\end{align*}

Now we use that $\dif\pi_\mu|_e:\RR^{q+n}\longrightarrow\RR^n$ is the projection relative to $\RR^{q+n}=\RR^q\oplus\RR^n$, the fact that $x=(0,...,0,x_1,...,x_n)$ and (\ref{dexp2}) to get

\begin{align*}
(\tilde{g}_\mu)_{ij}(x) &= \left\la\sum_{k=1}^n\Big(\sum_\alpha b_\alpha^{q+k,q+i}(\mu)x^\alpha\Big)e_{q+k}, \sum_{k=1}^n\Big(\sum_\alpha b_\alpha^{q+k,q+j}(\mu)x^\alpha\Big)e_{q+k}\right\ra \\
&=\sum_{k=1}^n\Big(\sum_{\alpha,\beta} b_\alpha^{q+k,q+i}(\mu) b_\beta^{q+k,q+j}(\mu)x^{\alpha+\beta}\Big).
\end{align*}
If we set
$$
a_\alpha^{ij}(\mu):= \sum_{k=1}^n \sum_{\alpha'+\beta'=\alpha} b_{\alpha'}^{q+k,q+i}(\mu) b_{\beta'}^{q+k,q+j}(\mu)
$$
then $(\tilde{g}_\mu)_{ij}(x)=\sum_\alpha a_\alpha^{ij}(\mu)x^\alpha$, with $\deg(a_\alpha^{ij})=\deg(b_{\alpha'}^{q+k,q+i}b_{\beta'}^{q+k,q+j})=|\alpha'|+|\beta'|=|\alpha|$.  The last assertion on the lower terms easily follows from

\begin{align*}
\left(\frac{I-e^{-\ad_\mu{x}}}{\ad_\mu{x}}\right)_{q+i,q+j} &= \delta_{i,j} -\unm\sum_{k=1}^n\mu_{q+k,q+j}^{q+i}x_k +\tfrac{1}{6}\sum_{k,l=1}^n\Big(\sum_{r=1}^{q+n}\mu_{q+k,r}^{q+i}\mu_{q+l,q+j}^r\Big)\, x_kx_l \notag \\
& + \;\mbox{monomials of degree}\, \geq 3,\notag
\end{align*}
concluding the proof of the proposition.
\end{proof}

\begin{corollary}\label{convmu3}
Let $\mu_k$ be a sequence in $\hca_{q,n}$ such that $\mu_k\to\lambda\in\hca_{q,n}$ and all $\psi_{\mu_k},\psi_\lambda$ are embeddings from an open neighborhood $\Omega$ of $0\in\RR^n$.  Then $\tilde{g}_{\mu_k}\to\tilde{g}_\lambda$ smoothly on $\Omega$.
\end{corollary}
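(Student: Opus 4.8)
\emph{Proof sketch.} The plan is to leverage the explicit power-series formula for $\tilde g_\mu$ from Proposition \ref{gmucc} together with coefficient bounds that are uniform in $\mu$ along the sequence. The first step is to note that the computation in the proof of Proposition \ref{gmucc} yields slightly more than the statement: the entries $\big(\tfrac{I-e^{-\ad_\mu x}}{\ad_\mu x}\big)_{q+k,q+i}$ are matrix coefficients of an \emph{entire} function of the matrix $\ad_\mu x$, which depends linearly on $x$, so the series $\sum_\alpha b_\alpha^{ij}(\mu)\,x^\alpha$ of (\ref{dexp2}), and hence the product series $(\tilde g_\mu)_{ij}(x)=\sum_\alpha a_\alpha^{ij}(\mu)\,x^\alpha$, converge absolutely for \emph{every} $x\in\RR^n$. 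On the fixed open set $\Omega$, where each $\psi_{\mu_k}$ and $\psi_\lambda$ is an embedding (in particular an immersion), this series represents exactly the coefficients of the pullback metric $\psi_\mu^{*}g_\mu$ (which agrees with $\tilde g_\mu$ of (\ref{defgmu}) on $\Omega\cap B(0,r_\mu)$), so by $\tilde g_{\mu_k}$ and $\tilde g_\lambda$ we mean these metrics on $\Omega$. This remark is the one point needing a little care, since Proposition \ref{gmucc} a priori only asserts convergence on $B(0,r_{\mu_k})$ and $B(0,r_\lambda)$, whose radii may vary with $k$.

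Next I would record quantitative estimates on the universal coefficients. From $\tfrac{I-e^{-z}}{z}=\sum_{m\geq 0}\tfrac{(-1)^m}{(m+1)!}z^m$ and $\ad_\mu x=\sum_l x_l\,\ad_\mu e_l$, expanding $(\ad_\mu x)^m$ and collecting the monomial $x^\alpha$ gives $|b_\alpha^{ij}(\mu)|\leq (c\|\mu\|)^{|\alpha|}/\alpha!$ for a universal constant $c=c(q,n)$; then, using $a_\alpha^{ij}(\mu)=\sum_{k=1}^n\sum_{\alpha'+\beta'=\alpha}b_{\alpha'}^{q+k,q+i}(\mu)\,b_{\beta'}^{q+k,q+j}(\mu)$ and the identity $\sum_{\alpha'+\beta'=\alpha}\tfrac{1}{\alpha'!\,\beta'!}=\tfrac{2^{|\alpha|}}{\alpha!}$, one obtains $|a_\alpha^{ij}(\mu)|\leq n\,(2c\|\mu\|)^{|\alpha|}/\alpha!$. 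Since $\mu_k\to\lambda$, the set $\{\mu_k\}\cup\{\lambda\}$ is bounded, say $\|\mu_k\|,\|\lambda\|\leq M$; set $C:=2cM$. For any multiindex $\beta$, termwise differentiation gives $\partial^\beta(\tilde g_\mu)_{ij}(x)=\sum_\gamma a_{\gamma+\beta}^{ij}(\mu)\,\tfrac{(\gamma+\beta)!}{\gamma!}\,x^\gamma$, and for $\mu\in\{\mu_k\}\cup\{\lambda\}$ and $|x_i|\leq R$ this is dominated termwise by $n\,C^{|\beta|}\sum_\gamma\tfrac{(CR)^{|\gamma|}}{\gamma!}=n\,C^{|\beta|}e^{nCR}$, a bound independent of $\mu$ along the sequence; in particular each $\partial^\beta(\tilde g_\mu)_{ij}$ is obtained by termwise differentiation of the series, with a tail estimate uniform in $\mu$.

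Finally I would run the standard $\varepsilon/3$ argument. Fix $1\le i,j\le n$, a multiindex $\beta$, a compact $K\subset\Omega$, pick $R$ with $K\subset[-R,R]^n$, and let $\varepsilon>0$. By the uniform domination above there is $N$ with $\sum_{|\gamma|>N}|a_{\gamma+\beta}^{ij}(\mu)|\,\tfrac{(\gamma+\beta)!}{\gamma!}\,|x^\gamma|<\varepsilon/3$ for all $x\in K$ and all $\mu$ in the sequence. The remaining finite sum involves only the coefficients $a_{\gamma+\beta}^{ij}$ with $|\gamma|\le N$, each a polynomial in $\mu$ hence continuous, so since $\mu_k\to\lambda$ and $|x^\gamma|$ is bounded on $K$, this finite sum of differences is $<\varepsilon/3$ for all large $k$; hence $\sup_{x\in K}|\partial^\beta(\tilde g_{\mu_k})_{ij}(x)-\partial^\beta(\tilde g_\lambda)_{ij}(x)|<\varepsilon$ for all large $k$. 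As $i,j$, $\beta$ and $K$ were arbitrary, this is exactly the assertion that $\tilde g_{\mu_k}\to\tilde g_\lambda$ smoothly on $\Omega$ (cf. Remark \ref{convr}). The main obstacle, as noted, is purely the first step --- making sure the power series of Proposition \ref{gmucc} can legitimately be used on the fixed $\Omega$ rather than on the varying balls $B(0,r_{\mu_k})$; the coefficient estimate and the $\varepsilon/3$ bookkeeping are routine.
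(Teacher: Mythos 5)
Your proposal is correct and follows essentially the same route as the paper: the paper's proof of Corollary \ref{convmu3} likewise differentiates the universal series $(\tilde{g}_\mu)_{ij}=\sum_\alpha a_\alpha^{ij}(\mu)x^\alpha$ termwise and invokes the polynomial dependence of $a_\alpha^{ij}$ on $\mu$ to pass to the limit. The only difference is that you additionally supply the uniform coefficient bounds $|a^{ij}_\alpha(\mu)|\leq n(2c\|\mu\|)^{|\alpha|}/\alpha!$ and the $\varepsilon/3$ tail estimate needed to justify uniform convergence on compacta, details the paper's two-line argument leaves implicit; this is a welcome tightening rather than a departure.
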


\begin{proof}
The coordinates $(\tilde{g}_\mu)_{ij}$ of the metric $\tilde{g}_\mu$ have been described in Proposition \ref{gmucc} for any $\mu\in\hca_{q,n}$.  We therefore have that
$$
\partial^\beta(\tilde{g}_{\mu_k})_{ij} = \sum_\alpha a_\alpha^{ij}(\mu_k) \partial^\beta x^\alpha,
$$
and since the coefficient $a^{ij}_\alpha(\mu)$ depend polynomially on $\mu_k$, it follows that $a_\alpha^{ij}(\mu_k)\to a_\alpha^{ij}(\lambda)$ uniformly, as $k\to\infty$. This implies that $\tilde{g}_{\mu_k}\to \tilde{g}_\lambda$ smoothly on $\Omega$ (see Remark \ref{convr}), as was to be shown.
\end{proof}

\begin{remark}\label{oexpc}
If instead of canonical coordinates $\psi_\mu=\pi_\mu\circ\exp_{\mu}:B(0,r_\mu)\subset\RR^n\longrightarrow G_\mu/K_\mu$  we use any coordinate system of the form $$
\begin{array}{l}
B(0,r^1_\mu)\times...\times B(0,r^m_\mu)\subset W_1\oplus...\oplus W_m=\RR^n\longrightarrow G_\mu/K_\mu,  \\ \\
(x_1,...,x_m) \mapsto \pi_\mu\left(\exp_\mu(x_1)...\exp_\mu(x_m)\right),
\end{array}
$$
(cf. e.g. \cite[Lemma 2.4]{Hlg}) we can define the corresponding $\tilde{g}_\mu$ and $r^1_\mu,...,r^m_\mu$ will play the role of the Lie injectivity radius for any $\mu\in\hca_{q,n}$ relative to our fixed decomposition $\RR^n=W_1\oplus...\oplus W_m$.  A universal formula for the coordinate $(\tilde{g}_\mu)_{ij}$ analogous to Proposition \ref{gmucc} follows in much the same way, and therefore smooth convergence $\tilde{g}_{\mu_k}\to\tilde{g}_\lambda$ for any convergent sequence $\mu_k\to\lambda$ holds as in Corollary \ref{convmu3}.
\end{remark}

It follows from the proof of Proposition \ref{gmucc} that for any $x$ close to $0$ in $\RR^n$ we have
$$
(\tilde{g}_\mu)_{ij}(x)=\la
\left(I-\unm\ad_\mu{x}+\tfrac{1}{6}(\ad_\mu{x})^2-\dots\right)e_{q+i},
\left(I-\unm\ad_\mu{x}+\tfrac{1}{6}(\ad_\mu{x})^2-\dots\right)e_{q+j}\ra,
$$
where one has to project onto $\RR^n$ before taking the inner product.  It is therefore evident that $\tilde{g}_\mu$ does not depend for instance on $\mu|_{\RR^q\times\RR^q}$, and thus the convergence of a sequence of metrics $\tilde{g}_{\mu_k}\to\tilde{g}_\lambda$ might not affect the brackets completely, in the sense that it might not imply convergence of some part of the brackets $\mu_k$ to the corresponding part of $\lambda$.

We are however in a position to prove that the usual topology on $\hca_{q,n}$ essentially corresponds to infinitesimal convergence (see Definition \ref{infconv}).

\begin{theorem}\label{convmu2}
Let $\mu_k$ be a sequence and $\lambda$ an element in $\hca_{q,n}$.
\begin{itemize}
\item[(i)] If $\mu_k\to\lambda$ in $\hca_{q,n}$ (usual vector space topology), then $(G_{\mu_k}/K_{\mu_k},g_{\mu_k})$ infinitesimally converges to $(G_{\lambda}/K_{\lambda},g_{\lambda})$.

\item[(ii)] If $(G_{\mu_k}/K_{\mu_k},g_{\mu_k})$ infinitesimally converges to $(G_{\lambda}/K_{\lambda},g_{\lambda})$, then
    $$
    \proy_{\RR^n}\circ\mu_k|_{\RR^n\times\RR^n}\to\proy_{\RR^n}\circ\lambda|_{\RR^n\times\RR^n},
    $$
    where $\proy_{\RR^n}:\RR^{q+n}\longrightarrow\RR^n$ is the projection with respect to the decomposition $\RR^{q+n}=\RR^q\oplus\RR^n$.
\end{itemize}
\end{theorem}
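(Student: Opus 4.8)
The plan is to work throughout in the canonical coordinates $\psi_\mu=\pi_\mu\circ\exp_\mu$ and to exploit the universal power series expansion of $\tilde g_\mu$ from Proposition \ref{gmucc}. Write $\nu_\mu:=\proy_{\RR^n}\circ\mu|_{\RR^n\times\RR^n}$.

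For part (i), I would take $p:=eK_\lambda$, identified with $0\in\RR^n$ via $\psi_\lambda$, choose $\rho_k>0$ with $\rho_k<r_{\mu_k}$ and $\overline{B(0,\rho_k)}$ inside a fixed compact ball $\overline{B(0,R)}\subset B(0,r_\lambda)$ (possible, e.g. $\rho_k=\min\{R,\tfrac12 r_{\mu_k}\}$), and set $\Omega_k:=\psi_\lambda(B(0,\rho_k))$, $\phi_k:=\psi_{\mu_k}\circ(\psi_\lambda)^{-1}$; these are embeddings, all $\Omega_k$ contain $p$, and $(\psi_\lambda)^*\phi_k^*g_{\mu_k}=\tilde g_{\mu_k}$ on $B(0,\rho_k)$. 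So the statement reduces to showing $\tilde g_{\mu_k}\to\tilde g_\lambda$ in $C^\infty$ on the fixed ball $\overline{B(0,R)}\supseteq\overline{\Omega_k}$ (in the chart, with $\tilde g_{\mu_k}$ read off from its power series beyond $B(0,r_{\mu_k})$). By Proposition \ref{gmucc} each $a^{ij}_\alpha$ is a universal polynomial in $\mu$, so $a^{ij}_\alpha(\mu_k)\to a^{ij}_\alpha(\lambda)$; and since $(\tilde g_\mu)_{ij}(x)=\la\proy_{\RR^n}A(x)e_{q+i},\proy_{\RR^n}A(x)e_{q+j}\ra$ with $A(x)=\tfrac{I-e^{-\ad_\mu x}}{\ad_\mu x}$ entire in $x$ and $\|A(x)\|\le e^{C\|\mu\|\,\|x\|}$, the Cauchy estimates bound $|a^{ij}_\alpha(\mu)|$ by a geometric sequence in $|\alpha|$ uniformly over the bounded set $\{\mu_k\}$; hence the series converge uniformly with all $x$-derivatives on compact sets and $\tilde g_{\mu_k}\to\tilde g_\lambda$ in $C^\infty_{\mathrm{loc}}(\RR^n)$. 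On $\overline{B(0,R)}$ the metric $\tilde g_\lambda$ is uniformly nondegenerate with bounded Christoffel symbols, so $\|\nabla^j_{g_\lambda}(\tilde g_{\mu_k}-\tilde g_\lambda)\|_{g_\lambda}$ is controlled by finitely many partials of $\tilde g_{\mu_k}-\tilde g_\lambda$ and tends to $0$ there, hence on $\Omega_k$; this is infinitesimal convergence.

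For part (ii), I would read the witnessing data $\phi_k^*g_{\mu_k}$ in the canonical chart $\psi_\lambda$ about $p=eK_\lambda$, obtaining metrics $\tilde h_k$ near $0$ whose Taylor coefficients at $0$ all converge to those of $\tilde g_\lambda$. Since each $\phi_k$ is an isometry onto its image and $(G_{\mu_k}/K_{\mu_k},g_{\mu_k})$ is homogeneous, the germ of $\tilde h_k$ at $0$ is isometric to the germ at $0$ of $\tilde g_{\mu_k}$; as an isometry of metric germs is determined by its $1$-jet, this isometry is $F_k=\exp_{\tilde g_{\mu_k}}\circ A_k\circ(\exp_{\tilde h_k})^{-1}$ with $F_k(0)=0$, $A_k^tA_k=\tilde h_k(0)\to I$, and I would normalise the $\Or(n)$-ambiguity so that $A_k\to I$. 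Differentiating $F_k^*\tilde g_{\mu_k}=\tilde h_k$ at $0$ expresses the $1$-jet of $\tilde h_k$ at $0$ as a fixed, invertible algebraic function of the $1$-jet of $\tilde g_{\mu_k}$ at $0$ and of $A_k$ (the Hessian of $F_k$ at $0$ being itself determined by those $1$-jets through the Christoffel symbols of $\tilde g_{\mu_k}$ and $\tilde h_k$); solving gives $\partial(\tilde g_{\mu_k})_{ij}(0)\to\partial(\tilde g_\lambda)_{ij}(0)$, which by the first-order part of Proposition \ref{gmucc} says exactly that the symmetrisation $(\mu_k)^{q+i}_{q+m,q+j}+(\mu_k)^{q+j}_{q+m,q+i}$ of $\nu_{\mu_k}$ converges to that of $\nu_\lambda$. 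The remaining totally antisymmetric part of $\nu_{\mu_k}$ I would then extract from the second-order jet together with the convergence of $\Riem_{\mu_k},\nabla_{\mu_k}\Riem_{\mu_k},\dots$ (isometry invariants, hence convergent under infinitesimal convergence) and the skew-symmetry of the bracket, yielding $\proy_{\RR^n}\circ\mu_k|_{\RR^n\times\RR^n}\to\proy_{\RR^n}\circ\lambda|_{\RR^n\times\RR^n}$.

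The main obstacle is entirely in part (ii): the witnessing embeddings $\phi_k$ are \emph{arbitrary}, so one never sees $\tilde g_{\mu_k}$ itself but only a diffeomorphic copy $F_k^*\tilde g_{\mu_k}$, and $\mu_k$ is not assumed bounded; the delicate point is to use homogeneity together with the rigidity of isometries (a $1$-jet determines an isometry) to show that the $1$-jet of $\tilde g_{\mu_k}$ at the origin is nevertheless pinned down, and then to argue that the metric $1$- and $2$-jets, the covariant derivatives of curvature coming from infinitesimal convergence, and the skew-symmetry of $\nu_{\mu_k}$ are jointly enough to determine the projected bracket in the limit. Part (i) is by contrast routine: the only care needed is shrinking $\Omega_k$ with $r_{\mu_k}$ and passing between covariant and partial derivatives on a fixed compact ball.
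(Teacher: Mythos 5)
Part (i) of your proposal is correct and is essentially the paper's argument: the paper also takes $\Omega_k=\psi_\lambda(B(0,\tilde r))$ with $\tilde r=\min\{r_{\mu_k},r_\lambda\}$, $\phi_k=\psi_{\mu_k}\circ\psi_\lambda^{-1}$, and deduces $\tilde g_{\mu_k}\to\tilde g_\lambda$ at $0$ from the polynomial dependence of the coefficients $a^{ij}_\alpha(\mu)$ in Proposition \ref{gmucc}; your Cauchy-estimate remarks only add detail. The problems are all in part (ii). First, your normalisation ``so that $A_k\to I$'' is not available: the ambiguity in the isometry $F_k$ between the germs of $\tilde h_k$ and $\tilde g_{\mu_k}$ at $0$ is composition with isometries of those germs fixing the basepoint, i.e.\ with the \emph{local isotropy group}, which is in general a proper (possibly trivial) subgroup of $\Or(n)$. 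So $A_k$ is pinned down only up to that subgroup, and without $A_k\to I$ your scheme yields the $1$-jet of $\tilde g_{\mu_k}$ only up to an uncontrolled sequence in $\Or(n)$. (Indeed, with genuinely arbitrary witnessing embeddings the literal conclusion cannot be forced: for fixed $Q\in\Or(n)$ the constant sequence $\mu_k:=Q.\lambda$ is equivariantly isometric to $\lambda$ by Corollary \ref{const2}, hence infinitesimally convergent to it, yet $\proy_{\RR^n}\circ\mu_k|_{\RR^n\times\RR^n}$ is the $Q$-rotation of $\proy_{\RR^n}\circ\lambda|_{\RR^n\times\RR^n}$. The paper avoids this by reading the hypothesis as convergence $\tilde g_{\mu_k}\to\tilde g_\lambda$ at $0$ in the canonical chart; it does not attempt the reduction you flag as ``the main obstacle''.)

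Second, and independently, your extraction of the totally antisymmetric part of $\nu_{\mu_k}$ is not an argument. The first-order data you use determines $\nu_{\mu_k}$ only modulo alternating $3$-tensors, and the second-order coefficients of $\tilde g_{\mu_k}$ (see the degree-two terms in Proposition \ref{gmucc}) as well as $\Riem_{\mu_k}$ are \emph{quadratic} in $\mu_k$ and involve the components $\mu_k|_{\RR^q\times\RR^n}$ and $\proy_{\RR^q}\circ\mu_k|_{\RR^n\times\RR^n}$, which are precisely the parts that infinitesimal convergence does not control (cf.\ Remark \ref{remconvmu2}); moreover $\mu_k$ is not assumed bounded. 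There is no linear inversion recovering the alternating part of $\nu_{\mu_k}$ from that data. The paper's actual device, which is missing from your proposal, is the formula for the Levi-Civita connection in terms of the Killing vector fields (\cite[7.27]{Bss}), namely $\tilde g_\mu(0)\bigl((\nabla^\mu_{e_r}e_j)_0,e_i\bigr)=\unm(\mu^i_{rj}+\mu^j_{ri}+\mu^r_{ji})$ for $q+1\le i,j,r\le q+n$: this is \emph{linear} in $\mu$, involves only indices in the $\RR^n$ range, and subtracting the first-partial data $\partial^\alpha(\tilde g_\mu)_{ij}(0)=-\unm(\mu^i_{rj}+\mu^j_{ri})$ isolates $\mu^r_{ji}$ exactly. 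You need this (or an equivalent linear identity) to close part (ii).
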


\begin{proof}
Let us first prove part (i).  By arguing as in the proof of Corollary \ref{convmu3}, we get that
$\tilde{g}_{\mu_k}\to \tilde{g}_\lambda$ at $0\in\RR^n$, in the sense used in Definition \ref{infconv}.  In other words, $(B(0,r_{\mu_k}),\tilde{g}_{\mu_k})$ infinitesimally converges to $(B(0,r_{\lambda}),\tilde{g}_{\lambda})$, and thus (i) follows. Indeed, if $\Omega_k:=\psi_\lambda(B(0,\tilde{r}))$, where $\tilde{r}:=\min\{r_{\mu_k},r_\lambda\}$, and $\phi_k:=\psi_{\mu_k}\circ\psi_\lambda^{-1}$,
then as $k\to\infty$,
$$
\phi_k^*g_{\mu_k}= (\psi_\lambda^{-1})^*\psi_{\mu_k}^*g_{\mu_k} =(\psi_\lambda^{-1})^*\tilde{g}_{\mu_k}\to (\psi_\lambda^{-1})^*\tilde{g}_{\lambda} =g_\lambda, \quad \mbox{at}\; eK_\lambda.
$$

For part (ii), we first note that if $\nabla^\mu$ denotes the Levi-Civita connection of $\tilde{g}_\mu$, then
$$
\tilde{g}_\mu(0)\left((\nabla^\mu_{e_r}e_j)_0,e_i\right) = \unm(\mu_{rj}^i+\mu_{ri}^j+\mu_{ji}^r), \qquad q+1\leq i,j,r\leq q+n,
$$
(see for instance \cite[7.27]{Bss}), and if $\alpha$ is the multiindex with $1$ at entry $r$ and $0$ elsewhere, then it is easy to see by using Proposition  \ref{gmucc} that
$$
\partial^\alpha(\tilde{g}_\mu)_{ij}(0) = -\unm(\mu_{rj}^i+\mu_{ri}^j), \qquad q+1\leq i,j,r\leq q+n.
$$
Therefore, the convergence $\tilde{g}_{\mu_k}\to \tilde{g}_\lambda$ at $0\in\RR^n$ (recall that this is equivalent to $g_{\mu_k}\to g_\lambda$ at $eK_\lambda$) implies that
$$
(\mu_k)_{rj}^i+(\mu_k)_{ri}^j+(\mu_k)_{ji}^r \to\lambda_{rj}^i+\lambda_{ri}^j+\lambda_{ji}^r, \qquad (\mu_k)_{rj}^i+(\mu_k)_{ri}^j\to\lambda_{rj}^i+\lambda_{ri}^j,
$$
which gives uniform convergence $(\mu_k)_{ji}^r\to\lambda_{ji}^r$ for all $q+1\leq i,j,r\leq q+n$, as $k\to\infty$.  This implies that
$\proy_{\RR^n}\circ\mu_k|_{\RR^n\times\RR^n}\to\proy_{\RR^n}\circ\lambda|_{\RR^n\times\RR^n}$, concluding the proof of the theorem.
\end{proof}

\begin{remark}\label{remconvmu2}
Concerning to what parts of the brackets other than the given in Theorem \ref{convmu2}, (ii) will converge under infinitesimal convergence, we can observe:

\begin{itemize}
\item It follows from the almost-effectiveness condition (h4) (see Section \ref{varhs}) that $\mu_k|_{\RR^q\times\RR^q}$ is determined by $\mu_k|_{\RR^q\times\RR^n}$.

\item If $\mu_k(\RR^n,\RR^n)\subset\RR^n$ for all $k$, then it is easy to prove that infinitesimal convergence is equivalent to only $\mu_k|_{\RR^n\times\RR^n}\to\lambda|_{\RR^n\times\RR^n}$, as $k\to\infty$.  In other words, the isotropy Lie subalgebra and its isotropy representation are not affected at all by the convergence $\tilde{g}_{\mu_k}\to \tilde{g}_\lambda$ at $0\in\RR^n$ if $(\RR^n,\mu_k)$ is a Lie subalgebra (and consequently $G_{\mu_k}$ is a semidirect product).

\item On the other hand, under the assumption $\proy_{\RR^q}\circ\mu_k(\RR^n,\RR^n)=\RR^q$ for all $k$, it is reasonable to expect from Theorem \ref{convmu2}, (ii), the formula for the coefficients of monomials of degree $2$ in the coordinates of $\tilde{g}_{\mu_k}(x)$ (see Proposition \ref{gmucc}) and the first observation above that infinitesimal convergence will imply the full convergence $\mu_k\to\lambda$.
\end{itemize}
\end{remark}

Recall from Example \ref{AW3} that a positive lower bound on the Lie injectivity radii is necessary to get local convergence from brackets convergence.  We now prove that this suffices.

\begin{theorem}\label{convmu}
Let $\mu_k$ be a sequence such that $\mu_k\to\lambda$ in $\hca_{q,n}$, as $k\to\infty$, and assume that $\inf\limits_k r_{\mu_k}>0$.  Then,
\begin{itemize}
\item[(i)] $\tilde{g}_{\mu_k}\to\tilde{g}_\lambda$ smoothly on some ball $B(0,\tilde{r})\subset\RR^n$, $\tilde{r}>0$.

\item[(ii)] $(G_{\mu_k}/K_{\mu_k},g_{\mu_k})$ locally converges to $(G_{\lambda}/K_{\lambda},g_{\lambda})$.

\item[(iii)] There exists a subsequence of $(G_{\mu_k}/K_{\mu_k},g_{\mu_k})$ which converges in the pointed sense to a homogeneous manifold locally isometric to $(G_{\lambda}/K_{\lambda},g_{\lambda})$.

\item[(iv)] $(G_{\mu_k}/K_{\mu_k},g_{\mu_k})$ converges in the pointed sense to $(G_{\lambda}/K_{\lambda},g_{\lambda})$ if $G_\lambda/K_\lambda$ is compact.
\end{itemize}
\end{theorem}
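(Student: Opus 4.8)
The plan is to establish (i)--(iv) in this order, each feeding into the next; write $M_k:=G_{\mu_k}/K_{\mu_k}$ for brevity. For (i), the role of the hypothesis $\inf_k r_{\mu_k}>0$ is precisely to make $\tilde r:=\unm\min\{\inf_k r_{\mu_k},\,r_\lambda\}$ strictly positive (recall $r_\lambda>0$ always, since $\dif\psi_\lambda|_0$ is the identity of $\RR^n$, so $\psi_\lambda$ is an embedding near $0$). As $\tilde r<r_{\mu_k}$ for every $k$ and $\tilde r<r_\lambda$, all the maps $\psi_{\mu_k},\psi_\lambda$ are embeddings on the single ball $\Omega:=B(0,\tilde r)$, so Corollary \ref{convmu3} applies verbatim and gives $\tilde g_{\mu_k}\to\tilde g_\lambda$ smoothly on $\Omega$. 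For (ii) I would transport this picture to $G_\lambda/K_\lambda$: with $V:=\psi_\lambda(\Omega)$ a \emph{fixed} nonempty open neighborhood of $eK_\lambda$ and $\phi_k:=\psi_{\mu_k}\circ\psi_\lambda^{-1}:V\to M_k$, each $\phi_k$ is an embedding and
$$
\phi_k^*g_{\mu_k}=(\psi_\lambda^{-1})^*\psi_{\mu_k}^*g_{\mu_k}=(\psi_\lambda^{-1})^*\tilde g_{\mu_k}\longrightarrow(\psi_\lambda^{-1})^*\tilde g_\lambda=g_\lambda
$$
smoothly on $V$ by (i), which is exactly local convergence in the sense of Definition \ref{locconv}. (This is the argument from the proof of Theorem \ref{convmu2}(i), only now $\Omega$, hence $V$, is independent of $k$ --- the single place where the Lie injectivity radius bound is used directly.)

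For (iii), put $p:=eK_\lambda$ and $p_k:=\phi_k(p)$. By (ii) the sequence locally converges to $(G_\lambda/K_\lambda,g_\lambda)$, hence (cf.\ the discussion preceding the theorem) it has bounded geometry --- by homogeneity $\sup_{M_k}\|\nabla_{g_{\mu_k}}^j\Riem(g_{\mu_k})\|$ is the value at a single point of a continuous function of $\mu_k$, so it stays bounded along $\mu_k\to\lambda$ --- and is non-collapsed, because $\phi_k^*g_{\mu_k}\to g_\lambda$ on the fixed ball about $p$ gives uniform curvature and volume bounds there and hence a uniform lower bound for $\inj(M_k,g_{\mu_k},p_k)$. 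The compactness theorem \ref{pcomp} then produces a subsequence converging in the pointed sense to a complete pointed homogeneous manifold $(N,g_N,p_N)$, and the real work --- which I expect to be the main obstacle --- is to identify $N$ up to local isometry. For this I would note that the subsequence locally converges both to $(N,g_N)$ (pointed convergence implies local convergence) and to $(G_\lambda/K_\lambda,g_\lambda)$, and compare the two on a small metric ball: restricting the first convergence to $B:=B_{g_N}(p_N,\rho)$ gives embeddings $\sigma_j:B\to M_{k_j}$ with $\sigma_j(p_N)=p_{k_j}$ and $\sigma_j^*g_{k_j}\to g_N$, and since $\phi_{k_j}(V)$ contains a metric ball of definite radius about $p_{k_j}$ (using $d_{g_k}(\phi_k(q),\phi_k(q'))\to d_{g_\lambda}(q,q')$), for $\rho$ small and $j$ large the maps $\rho_j:=\phi_{k_j}^{-1}\circ\sigma_j:B\to V$ are well defined. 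They are embeddings with $\rho_j^*(\phi_{k_j}^*g_{k_j})=\sigma_j^*g_{k_j}\to g_N$ while $\phi_{k_j}^*g_{k_j}\to g_\lambda$, so an Arzela--Ascoli argument with the usual elliptic bootstrap gives, along a further subsequence, a smooth limit $\rho_\infty:B\to V$ with $\rho_\infty^*g_\lambda=g_N$, which is necessarily a local isometry and a diffeomorphism onto an open set around $p$. Thus a neighborhood of $p_N$ in $N$ is isometric to an open subset of $(G_\lambda/K_\lambda,g_\lambda)$, and since both manifolds are homogeneous, $(N,g_N)$ is locally isometric to $(G_\lambda/K_\lambda,g_\lambda)$.

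For (iv), assume $G_\lambda/K_\lambda$ is compact; then it is also simply connected ($G_\lambda$ is simply connected and $K_\lambda$ connected), and so is every $M_k$. Let $(N,g_N,p_N)$ be any pointed subsequential limit, complete and locally isometric to $G_\lambda/K_\lambda$ by (iii). Since $G_\lambda/K_\lambda$ is complete, simply connected, and --- being homogeneous --- real analytic, and since homogeneity furnishes a local isometry into $N$ near every point of $G_\lambda/K_\lambda$, the germ of local isometry near $eK_\lambda$ continues analytically along every path, path-independently because $\pi_1(G_\lambda/K_\lambda)=1$, to a local isometry $F:G_\lambda/K_\lambda\to N$ defined on the whole of $G_\lambda/K_\lambda$; being a local isometry out of a complete manifold, $F$ is a Riemannian covering map. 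Hence $N$ is a quotient of the compact $G_\lambda/K_\lambda$, in particular compact. But a pointed-convergent sequence with compact limit is eventually diffeomorphic to its limit (as noted right after Definition \ref{pointed}), so $M_k\cong N$ along the subsequence for large $k$; as each $M_k$ is simply connected, so is $N$, which forces the covering $F$ to be a diffeomorphism, hence a global isometry $(G_\lambda/K_\lambda,g_\lambda)\cong(N,g_N)$. Since the basepoint is immaterial for homogeneous manifolds, every pointed subsequential limit of $(M_k,g_{\mu_k},p_k)$ is thus $(G_\lambda/K_\lambda,g_\lambda,eK_\lambda)$ up to isometry; and since every subsequence of $(M_k,g_{\mu_k},p_k)$ possesses --- by bounded geometry and non-collapsing --- a further pointed-convergent subsequence, all with this same limit, the whole sequence converges to $(G_\lambda/K_\lambda,g_\lambda,eK_\lambda)$ in the pointed sense.
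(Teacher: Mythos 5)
Your proof is correct and follows essentially the same route as the paper: the lower bound on the Lie injectivity radii yields a fixed ball on which Corollary \ref{convmu3} and the pull-back argument of Theorem \ref{convmu2} give (i) and (ii), the compactness theorem plus uniqueness of local limits gives (iii), and simple connectivity together with the covering/analytic-continuation argument gives (iv). The only difference is one of completeness: you spell out the identification of the pointed limit up to local isometry in (iii), the extension of the local isometry to a global one in (iv), and the subsequence argument upgrading subconvergence to convergence, all of which the paper leaves implicit.
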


\begin{remark}
Two different subsequences of $(G_{\mu_k}/K_{\mu_k},g_{\mu_k})$ may converge to different limits in the pointed topology if $G_\lambda/K_\lambda$ is not compact (see Example \ref{berger}).
\end{remark}

\begin{remark}\label{oexpc2}
The metrics $\tilde{g}_{\mu_k}$, $\tilde{g}_\lambda$, in part (i) can be replaced by the ones obtained by considering the other possible coordinates described in Remark \ref{oexpc}.  This is often useful as the radii $r^1_\mu,...,r^m_\mu$ may be larger than the Lie injectivity radius $r_\mu$, providing smooth convergence on a larger open subset of $\RR^n$.  We have for example that $\exp:\slg_2(\RR)\longrightarrow\widetilde{\Sl_2}(\RR)$ is not a diffeomorphism, and however $\vp(xe_1+ye_2+ze_3):=\exp(xe_1).\exp(ye_2+ze_3)$ is so if $\{ e_i\}$ is a basis of $\slg_2(\RR)$ such that $[e_2,e_3]=-e_1$, $[e_3,e_1]=e_2$, $[e_1,e_2]=e_3$ (this will be used in Examples \ref{berger} and \ref{hyp} to prove certain pointed convergence).
\end{remark}

\begin{proof}
The first two items follow by arguing as in the proof of Theorem \ref{convmu2} and using that, in this case, we can fix a neighborhood of the form $\Omega=\psi_\lambda(B(0,\tilde{r}))$ of $eK_\lambda$.

From part (ii) and the compactness theorem, we get a subsequence converging to a complete Riemannian manifold $(M,g)$ which is automatically homogeneous.  But such a subsequence also locally converges to $(G_{\lambda}/K_{\lambda},g_{\lambda})$, and so $(M,g)$ must be locally isometric to $(G_{\lambda}/K_{\lambda},g_{\lambda})$.  This proves (iii).  If in addition $G_\lambda/K_\lambda$ is compact, then $M$ is necessarily diffeomorphic to $G_{\lambda}/K_{\lambda}$ as it must be diffeomorphic to $G_{\mu_k}/K_{\mu_k}$ for all $k$ and hence $M$ is simply connected.  As $(M,g)$ is also complete, we get that it is isometric to $(G_{\lambda}/K_{\lambda},g_{\lambda})$ and part (iv) follows.
\end{proof}

We now apply Theorem \ref{convmu} to the following examples.

\begin{example}\label{berger}
For any $\mu=\mu_{a,b,c}$ as in Example \ref{ex0-3} we define
$$
\psi_\mu:\RR\times\RR^2\longrightarrow G_\mu, \quad\psi_\mu(\theta,x,y):=\exp_\mu(\theta e_1).\exp_\mu(xe_2+ye_3).
$$
There exist $r,s >0$ depending on $\mu$ such that $\psi_\mu:(-s,s)\times B(0,r)\longrightarrow G_\mu$ is an embedding.  We know that any convergent sequence $\mu_k\to\lambda$ of these Lie brackets produces a smooth convergence $\psi_{\mu_k}^*g_{\mu_k}\to\psi_\lambda^*g_\lambda$ on any neighborhood of $0\in\RR^3$ where all $\psi_{\mu_k},\psi_\lambda$ be embeddings (see Corollary \ref{convmu3} and Remark \ref{oexpc}).

\no (i) As a first example, we take $\mu_k:=\mu_{-1/k,1,1}\to\mu_{0,1,1}=:\lambda$, as $k\to\infty$, and use that in this case all $\psi_{\mu_k},\psi_\lambda$ are diffeomorphisms from the whole $\RR^3$ to the corresponding Lie group (recall that $G_{\mu_k}\simeq\widetilde{\Sl_2}(\RR)$ for all $k$ and $G_\lambda\simeq E(2)$), to conclude that $(G_{\mu_k},g_{\mu_k})$ converges in the pointed sense to $(G_\lambda,g_\lambda)$, a flat manifold diffeomorphic to $\RR^3$.

\no (ii) Secondly, we consider  $\mu_k:=\mu_{1/k,1,1}\to\mu_{0,1,1}=\lambda$, as $k\to\infty$, a case that is topologically more involved as $G_{\mu_k}\simeq S^3$ for all $k$ and $G_\lambda$ is noncompact.  By using that
$$
h_k:(\RR^3,\mu_1)\longrightarrow (\RR^3,\mu_k=h_k.\mu_1), \qquad h_k=\left[\begin{smallmatrix} 1&&\\ &\sqrt{k}&\\ &&\sqrt{k}\end{smallmatrix}\right],
$$
is an isomorphism of Lie algebras, one easily obtains that
$$
\psi_k:=\psi_{\mu_k}:(-s,s)\times B(0,\sqrt{k}r)\longrightarrow G_{\mu_k}
$$
is an embedding for all $k$, where $r,s>0$ are the existing numbers with this property for $\psi_1$.  If
$$
\RR\times B(0,\sqrt{k}r) \stackrel{p_k}{\longrightarrow}  S^1\times B(0,\sqrt{k}r) \stackrel{\phi_k}{\longrightarrow} G_{\mu_k},
$$
are respectively defined by $p_k(\theta,x,y):=(e^{i\theta/2},x,y)$ and
$$
\phi_k(e^{i\theta},x,y):=\exp_{\mu_k}(2\theta e_1).\exp_{\mu_k}(xe_2+ye_3),
$$
then $\psi_k=\phi_k\circ p_k$  and since $\psi_k$ is an immersion we get that $\phi_k$ is an embedding for all $k$.  As $p_k^*\phi_k^*g_{\mu_k}=\psi_k^*g_{\mu_k}\to\psi_\lambda^*g_\lambda$ smoothly on each open subset of the form $(-s+t,s+t)\times B(0,\sqrt{k}r)$, $t>0$, and $p_k$ is a local isometry, one obtains that $\phi_k^*g_{\mu_k}\to g_\infty$ smoothly on compact subsets of $S^1\times\RR^2$, as $k\to\infty$, where $g_\infty$ is the metric on $S^1\times\RR^2$ defined by $p_\infty^*g_\infty:=\psi_\lambda^*\tilde{g}_\lambda$.  In other words, we conclude that $(G_{\mu_k},g_{\mu_k})$ converges to the flat manifold $S^1\times\RR^2$ in the pointed topology.

\no (iii) We now use the two sequences above to show that the limit for the pointed subconvergence stated in Theorem \ref{convmu} may not be unique.  Indeed, consider the sequence
$$
\mu_k:=\left\{\begin{array}{ll} \mu_{1/k,1,1} & \mbox{if $k$ even}; \\ \\  \mu_{-1/k,1,1} & \mbox{if $k$ odd}; \end{array}\right.
$$
which clearly satisfies $\mu_k\to\lambda=\mu_{0,1,1}$, though we have proved pointed convergence $\mu_{2k}\to S^1\times\RR^2$ and $\mu_{2k+1}\to\RR^3$.

\no (iv)  The sequence $\mu_k:=\mu_{0,k,k}$ diverges as $k\to\infty$; however, they are all flat and diffeomorphic to $\RR^3$ and hence pointed convergence to the euclidean space $\RR^3$ holds.

\no (v) The Ricci eigenvalues of the divergent sequence $\mu_k^{\pm}:=\mu_{\pm 1/\sqrt{k},\sqrt{k},\sqrt{k}}$ satisfy
$$
\{\tfrac{1}{2k},\; \pm 1-\tfrac{1}{2k},\; \pm 1-\tfrac{1}{2k}\}\to \{ 0,\; \pm 1,\; \pm 1\}.
$$
This suggests that some kind of convergence $\mu_k^+\to\RR\times S^2$ or $S^1\times S^2$, and $\mu_k^-\to\RR\times H^2$, should hold.  The first one can not pointed subconverge as the injectivity radii go to $0$, but for the second one, pointed convergence actually holds, and this will be proved in the next example by working on $\hca_{1,3}$ instead of $\hca_{0,3}$ (recall that $\RR\times H^2$ is not reached by the family $\mu_{a,b,c}$).
\end{example}

\begin{example}\label{hyp}
We consider a sequence of the form $\mu_k:=\mu_{a_k,b_k,1,1}\in\hca_{1,3}$ as in Example \ref{ex1-3}, such that $a_1=-1$, $a_k\to 0^-$, $b_1=0$, $b_k\to-1$ and $a_k+b_k\equiv -1$.  Thus the sequence $\mu_k$ consists of left-invariant metrics on $\widetilde{\Sl_2}(\RR)$ (with an extra symmetry) and $\mu_k\to\lambda:=\mu_{0,-1,1,1}$, the manifold $\RR\times H^2$.  In much the same way as Example \ref{berger}, (i), one can construct diffeomorphisms
$$
\begin{array}{l}
\psi_k:\RR^3\longrightarrow G_{\mu_k}/K_{\mu_k}=\left(\RR\times\widetilde{\Sl_2}(\RR)\right)/\RR_k, \\ \\
\psi_\lambda:\RR^3\longrightarrow G_{\lambda}/K_{\lambda}=\RR\times\left(\widetilde{\Sl_2}(\RR)/\RR\right),
\end{array}
$$
such that $\psi_k^*g_{\mu_k}\to\psi_\lambda^* g_\lambda$ smoothly on $\RR^3$.  We therefore obtain that $(G_{\mu_k}/K_{\mu_k},g_{\mu_k})$ converges in the pointed sense to $\RR\times H^2$.  By computing the Ricci eigenvalues, we deduce that for all $k$, $\mu_{-1/\sqrt{k},-1+1/\sqrt{k},1,1}\in\hca_{1,3}$ is isometric to $\mu_k^-\in\hca_{0,3}$ from Example \ref{berger}, (v), which diverges as a sequence of brackets.
\end{example}

\subsection{Lie groups case}\label{lgcase}  Our aim in this section is to go over again the case of left-invariant metrics on Lie groups (i.e. $\hca_{0,n}$), the one which has been mostly applied in the literature (cf. e.g. the survey \cite{cruzchica} and the references there in).  Recall from
Example \ref{ex0-n} that $\hca_{0,n}$ is simply the variety $\lca_n$ of $n$-dimensional Lie algebras, and we identify
$$
\mu\in\lca_n\longleftrightarrow (G_\mu,g_\mu)=(G_\mu,\ip),
$$
where $g_\mu=g_{\ip}\equiv\ip$ denotes the left-invariant metric on the simply connected Lie group $G_\mu$ determined by the fixed inner product $\ip$ we have on the Lie algebra $(\RR^n,\mu)$ of $G_\mu$.  Every $h\in\Gl_n(\RR)$ defines an isometry
$$
(G_{h.\mu},\ip)\longrightarrow(G_\mu,\la h\cdot,h\cdot\ra),
$$
from which we deduce that the orbit $\Gl_n(\RR).\mu\subset\lca_n$ parameterizes the set of all left-invariant metrics on $G_\mu$ and the orbit $\Or(n).\mu$ the subset of those which are equivariantly isometric to $(G_\mu,\ip)$ (notice that $\tilde{U}_\mu=U_\mu=\Gl_n(\RR)$ for any $\mu\in\lca_n$).

The following lower bound for the Lie injectivity radius gives rise to special convergence features for Lie groups which are not valid in the general homogeneous case.  Recall that $\mu\in\lca_n$ is said to be {\it completely solvable} if all the eigenvalues of $\ad_\mu{x}$ are real for any $x$.  In particular, any nilpotent and any Iwasawa-type solvable $\mu$ is completely solvable.

\begin{lemma}\label{lieinjLG}
Let $r_\mu$ be the Lie injectivity radius of $\mu\in\lca_n=\hca_{0,n}$. Then,
\begin{itemize}
\item[(i)] $r_\mu\geq\tfrac{\pi}{\|\mu\|}$.

\item[(ii)] $r_\mu=\infty$ for any completely solvable $\mu$ (in particular, $G_\mu$ is diffeomorphic to $\RR^n$).
\end{itemize}
\end{lemma}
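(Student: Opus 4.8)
The plan is to treat the Lie groups case ($q=0$, so $K_\mu=\{e\}$ and $\psi_\mu=\exp_\mu\colon B(0,r)\subset\RR^n\to G_\mu$) by reading off, from formula (\ref{dexp}), when $\exp_\mu$ is an immersion and when it is injective on a euclidean ball; since an injective immersion between manifolds of the same dimension is automatically a diffeomorphism onto an open subset, these two facts are exactly what Definition \ref{lieinj} requires. Everything is governed by the eigenvalues of $\ad_\mu{x}$: by (\ref{dexp}) one has $\dif\exp_\mu|_x=\dif L_{\exp_\mu(x)}|_e\circ\tfrac{I-e^{-\ad_\mu x}}{\ad_\mu x}$, so $\dif\exp_\mu|_x$ is invertible iff $\tfrac{I-e^{-\ad_\mu x}}{\ad_\mu x}$ is, and the eigenvalues of the latter are $\tfrac{1-e^{-\lambda}}{\lambda}$ (value $1$ at $\lambda=0$) as $\lambda$ runs over the eigenvalues of $\ad_\mu{x}$; these vanish precisely for $\lambda\in 2\pi\im\ZZ\setminus\{0\}$.

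For the immersion part, Cauchy--Schwarz gives $|\mu(x,y)|\leq\|\mu\|\,|x|\,|y|$, hence $\|\ad_\mu{x}\|\leq\|\mu\|\,|x|$ in operator norm, so every eigenvalue $\lambda$ of $\ad_\mu{x}$ satisfies $|\lambda|\leq\|\mu\|\,|x|$. Thus for $|x|<\pi/\|\mu\|$ we get $|\lambda|<\pi<2\pi$, which shows that $\exp_\mu$ is an immersion on $B(0,\pi/\|\mu\|)$; and if $\mu$ is completely solvable then each such $\lambda$ is real, so $\tfrac{1-e^{-\lambda}}{\lambda}>0$ and $\dif\exp_\mu|_x$ is invertible for \emph{every} $x\in\RR^n$.

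For injectivity, suppose $\exp_\mu(x)=\exp_\mu(y)$ and apply $\Ad$ to obtain $e^{\ad_\mu x}=e^{\ad_\mu y}$ in $\Gl(\RR^n)$. Under the hypothesis of (i) one has $\|\ad_\mu{x}\|,\|\ad_\mu{y}\|<\pi$, and the matrix exponential is injective on $\{A:\|A\|<\pi\}$ (a standard fact: eigenvalues of modulus $<\pi$ lie in the strip where the complex logarithm is single-valued, so the multiplicative Jordan decomposition of $e^{\ad_\mu x}$ recovers the semisimple part of $\ad_\mu{x}$, while its nilpotent part is $\log$ of the unipotent factor); under the hypothesis of (ii) the eigenvalues of $\ad_\mu{x}$ and $\ad_\mu{y}$ are real, which again forces uniqueness of the logarithm. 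In either case $z:=x-y$ is central in $(\RR^n,\mu)$, so $\exp_\mu(z)=\exp_\mu(-y)\exp_\mu(x)=e$ (centrality makes the two factors commute), and $t\mapsto\exp_\mu(tz)$ is a one-parameter subgroup contained in $Z(G_\mu)$ which is trivial or a circle; but a simply connected Lie group admits no central circle $T$ (otherwise $G_\mu/T$ is again a Lie group, so $\pi_2(G_\mu/T)=0$, and the homotopy sequence of $T\to G_\mu\to G_\mu/T$ forces $\pi_1(T)=0$). Hence $z=0$, $x=y$, and $r_\mu\geq\pi/\|\mu\|$, proving (i).

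For (ii) it remains to note that the injective immersion $\exp_\mu\colon\RR^n\to G_\mu$ is onto, which is the classical theorem of Dixmier that the exponential map of a simply connected completely solvable Lie group is a global diffeomorphism; with it, $r_\mu=\infty$ and $G_\mu$ is diffeomorphic to $\RR^n$. The step I expect to be the main obstacle is exactly this injectivity argument --- upgrading the purely local statement ``$\dif\exp_\mu$ is invertible'' to a genuinely global one --- which is what forces the detour through $\Ad$, the injectivity of the matrix exponential on the $\pi$-ball, and the topological nonexistence of central circles in simply connected groups (and, in (ii), the nonelementary surjectivity of $\exp$ for completely solvable groups); a lesser point to be careful about is that $\|\mu\|$ is normalized so that $\|\ad_\mu{x}\|\leq\|\mu\|\,|x|$ holds with the stated constant.
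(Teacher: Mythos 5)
Your proof is correct, but it follows a genuinely more self-contained route than the paper's. The paper disposes of both items by citing the classical fact (see \cite{Vrd}, p.~112) that $\exp_\mu$ is a diffeomorphism onto its image on the whole set $V_\mu=\{x\in\RR^n: |\Im(c)|<\pi \mbox{ for every eigenvalue } c \mbox{ of } \ad_\mu{x}\}$, and then makes exactly your eigenvalue estimate, $|\Im(c)|\leq|c|\leq\left(\tr{\ad_\mu{x}(\ad_\mu{x})^t}\right)^{\unm}\leq\|\mu\|\|x\|$, to get $B(0,\tfrac{\pi}{\|\mu\|})\subset V_\mu$, with $V_\mu=\RR^n$ in the completely solvable case. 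What you have done is essentially reprove that cited theorem: the immersion half via the spectral mapping argument applied to (\ref{dexp}) (eigenvalues $\tfrac{1-e^{-\lambda}}{\lambda}$ vanishing only for $\lambda\in2\pi\im\ZZ\setminus\{0\}$), and the injectivity half via $\Ad$, single-valuedness of the matrix logarithm on spectra in the strip $|\Im|<\pi$, and the nonexistence of central circles in a simply connected group. Both halves are sound; you correctly identify injectivity as the real content (and it is exactly the nontrivial part of the Varadarajan statement), though your route leans on two nonelementary inputs the paper's citation hides --- the vanishing of $\pi_2$ of a Lie group for the central-circle step, and Dixmier's theorem for the surjectivity needed in the parenthetical of (ii) (which the paper also leaves unjustified). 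The trade-off: the paper's argument is two lines plus a reference, while yours makes visible why the constant is $\pi$ rather than $2\pi$ --- it is injectivity, not the immersion property, that forces the smaller radius --- and confirms that the normalization $\|\mu\|^2=\sum\tr{\ad_\mu{e_i}(\ad_\mu{e_i})^t}$ indeed yields $\|\ad_\mu{x}\|\leq\|\mu\|\|x\|$.
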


\begin{proof}
It is well known that for any $\mu\in\lca_n$ the neighborhood of $0\in\RR^n$ defined by
$$
V_\mu:=\{ x\in\RR^n: |\Im(c)|<\pi \;\mbox{for any eigenvalue $c$ of}\; \ad_\mu{x}\}
$$
satisfies that $\exp_\mu:V_\mu\longrightarrow G_\mu$ is a diffeomorphism onto its image (see \cite[pp. 112]{Vrd}).  On the other hand, for any eigenvalue $c$ of $\ad_\mu{x}$ one has
$$
|\Im(c)|\leq |c|\leq\left(\tr{\ad_\mu{x}(\ad_\mu{x})^t}\right)^{\unm}\leq \|\mu\|\|x\|,
$$
where $\|\mu\|^2:=\sum\|\mu(e_i,e_j)\|^2=\sum\tr{\ad_\mu{e_i}(\ad_\mu{e_i})^t}$.  This implies that $B(0,\tfrac{\pi}{\|\mu\|})\subset V_\mu$ and so part (i) follows.  Concerning part (ii), it is enough to note that $V_\mu=\RR^n$ in the completely solvable case.
\end{proof}

On the other hand, the parts of the brackets $\mu_k$ which might not be affected by an infinitesimal convergence $\tilde{g}_{\mu_k}\to\tilde{g}_\lambda$ are not present here, as $q=0$.  We can therefore rephrase Theorems \ref{convmu2} and \ref{convmu} in the case of Lie groups in a much stronger way as follows.

\begin{corollary}\label{convmu4}
Let $\mu_k$ be a sequence in $\lca_n=\hca_{0,n}$   Then the following conditions are equivalent:
\begin{itemize}
\item[(i)] $\mu_k\to\lambda$ in $\lca_n$ (usual vector space topology).

\item[(ii)] $(G_{\mu_k},\ip)$ infinitesimally converges to $(G_{\lambda},\ip)$.

\item[(iii)] $(G_{\mu_k},\ip)$ locally converges to $(G_{\lambda},\ip)$.

\item[(iv)] $(G_{\mu_k},\ip)$ converges in the pointed sense to $(G_{\lambda},\ip)$, provided $G_\lambda$ is compact or all $\mu_k$ are completely solvable.

\item[(v)] $g_{\mu_k}\to g_\lambda$ smoothly on $\RR^n$, provided all $\mu_k$ are completely solvable.
\end{itemize}
In any case, there is always a subsequence of $(G_{\mu_k},\ip)$ that is convergent in the pointed sense to a homogeneous manifold locally isometric to $(G_{\lambda},\ip)$.
\end{corollary}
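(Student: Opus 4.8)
The plan is to deduce the whole statement from Theorems \ref{convmu2} and \ref{convmu} together with the Lie-group bound of Lemma \ref{lieinjLG}, using throughout that $q=0$. First I would establish the equivalence of (i), (ii) and (iii). That (i) implies (ii) is exactly Theorem \ref{convmu2}(i). For (ii) $\Rightarrow$ (i), note that when $q=0$ the reductive decomposition is $\RR^{q+n}=\{0\}\oplus\RR^n$, so $\proy_{\RR^n}$ is the identity and $\mu_k|_{\RR^n\times\RR^n}=\mu_k$; hence the conclusion of Theorem \ref{convmu2}(ii) reads simply $\mu_k\to\lambda$. For (i) $\Rightarrow$ (iii): a convergent sequence $\mu_k\to\lambda$ is bounded, say $\|\mu_k\|\leq C$, so Lemma \ref{lieinjLG}(i) gives $r_{\mu_k}\geq\pi/\|\mu_k\|\geq\pi/C>0$, whence $\inf_k r_{\mu_k}>0$ and Theorem \ref{convmu}(ii) produces local convergence. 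Finally (iii) $\Rightarrow$ (ii) is the general implication local $\Rightarrow$ infinitesimal. The same bound $\inf_k r_{\mu_k}>0$ makes the last assertion of the corollary immediate through Theorem \ref{convmu}(iii).

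Next I would treat the assertions involving pointed convergence and item (v). Since pointed convergence implies local convergence, each of (iv) and (v) implies (iii), hence (i) by the above. Conversely, assume (i). If $G_\lambda$ is compact, then $\inf_k r_{\mu_k}>0$ and Theorem \ref{convmu}(iv) gives pointed convergence to $(G_\lambda,\ip)$, so (iv) holds. Suppose instead that all $\mu_k$ are completely solvable; the key sublemma is that $\lambda$ is then completely solvable as well, because for each fixed $x\in\RR^n$ the characteristic polynomial of $\ad_{\mu_k}x$ converges to that of $\ad_\lambda x$ and the set of real matrices with all eigenvalues real is closed (continuity of the roots of a polynomial in its coefficients), so $\ad_\lambda x$ has real spectrum for every $x$. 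By Lemma \ref{lieinjLG}(ii) we get $r_{\mu_k}=r_\lambda=\infty$, so the canonical charts $\psi_{\mu_k}:\RR^n\to G_{\mu_k}$ and $\psi_\lambda:\RR^n\to G_\lambda$ are global diffeomorphisms and $\tilde g_\mu=\psi_\mu^*g_\mu$ is defined on all of $\RR^n$. The universal polynomial description of $\tilde g_\mu$ in Proposition \ref{gmucc}, argued exactly as in Corollary \ref{convmu3}, then gives $\tilde g_{\mu_k}\to\tilde g_\lambda$ smoothly on every ball and hence on all of $\RR^n$, which upon identifying $g_\mu$ with $\tilde g_\mu$ via the global charts is precisely (v). Taking $\Omega_k:=G_\lambda$ and $\phi_k:=\psi_{\mu_k}\circ\psi_\lambda^{-1}$ we obtain $\phi_k^*g_{\mu_k}=(\psi_\lambda^{-1})^*\tilde g_{\mu_k}\to(\psi_\lambda^{-1})^*\tilde g_\lambda=g_\lambda$ smoothly on compact subsets of $G_\lambda$, which is pointed convergence; this proves (i) $\Rightarrow$ (iv) and (i) $\Rightarrow$ (v) in the completely solvable case, while (v) $\Rightarrow$ (iv) follows at once from the same $\Omega_k,\phi_k$.

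Beyond invoking the earlier results, the only step requiring a genuine argument is the closedness of complete solvability used in the last case; everything else rests on the inequality $r_{\mu_k}\geq\pi/\|\mu_k\|$, the feature special to left-invariant metrics on Lie groups, which is what upgrades infinitesimal convergence to local and, under the extra hypotheses, to pointed. To close the loop I would record the reverse implications explicitly: each of (iii), (iv), (v) entails infinitesimal convergence (via pointed/local $\Rightarrow$ infinitesimal, noting that (v) gives local convergence), hence (ii), after which the $q=0$ case of Theorem \ref{convmu2}(ii) gives (i); thus all five conditions are equivalent under their respective provisos. I would also remark, as in Example \ref{berger}, that in the final assertion the pointed limit of a subsequence need not be $(G_\lambda,\ip)$ itself when $G_\lambda$ is noncompact, only locally isometric to it.
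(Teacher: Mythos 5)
Your proposal is correct and follows essentially the same route the paper intends: the corollary is left as an immediate consequence of Theorems \ref{convmu2} and \ref{convmu} combined with the bound $r_\mu\geq\pi/\|\mu\|$ of Lemma \ref{lieinjLG}, exactly as you organize it. The only detail you add beyond what the paper makes explicit is the (correct) observation that complete solvability passes to the limit $\lambda$ by continuity of eigenvalues, which is indeed needed for $r_\lambda=\infty$ in items (iv) and (v).
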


\subsection{Remark on collapsing}\label{collapsing}
The following discussion is in the spirit of \cite[Section 3.11]{Grm}.  Actually much of what has been studied in this paper can be found in Gromov's book \cite{Grm}.

Let $\mu_k$ be a sequence in $\hca_{q,n}$ such that $\mu_k\to\lambda\in V_{q+n}$, and assume that $\lambda\notin\hca_{q,n}$.  Recall from Section \ref{varhs} that this is possible if and only if either (h2) or (h4) fail for $\lambda$, and only if $q>0$, that is, never for left-invariant metrics on Lie groups.

If (h4) does not hold for $\lambda$, then by considering new decompositions of the form
$$
\RR^q= \RR^{q'}\oplus\{ z\in\RR^q:\mu(z,\RR^n)=0\}, \qquad \RR^{q'+n}=\RR^{q'}\oplus\RR^n,
$$
and defining $\lambda'\in V_{q'+n}$ as the restriction of $\lambda$ to $\RR^{q'+n}$ (and projection on if necessary), we obtain that $\lambda'\in\hca_{q',n}$ provided (h2) holds for $\lambda'$.  It is not hard to convince ourselves on the validity of Theorems \ref{convmu2} and \ref{convmu} if we replace $(G_\lambda/K_\lambda,g_\lambda)$ by $(G_{\lambda'}/K_{\lambda'},g_{\lambda'})$ everywhere.

\begin{example}
The sequence $\mu_k:=\mu_{1,1,1,1/k}\in\hca_{1,3}$ from Example \ref{ex1-3} converges to  $\lambda:=\mu_{1,1,1,0}\notin\hca_{1,3}$.  In this case, $\lambda'=\mu_{1,1,1}\in\hca_{0,3}$ as in Example \ref{ex0-3}, a round metric on $S^3$.
\end{example}

The behavior to be understood is therefore under the failure of condition (h2) for $\lambda$.  So that $K_\lambda$ is not closed in $G_\lambda$, and a natural thing to do is to consider its closure $\overline{K_\lambda}$, which is again a connected Lie subgroup of $G_\lambda$ such that $\dim{\overline{K_\lambda}}=q'>q=\dim{K_\lambda}$.  By putting $q'=q+r$, $r>0$ and considering decompositions
$$
\RR^{q+n}=\RR^{q+r}\oplus\RR^{n-r}, \qquad \RR^{q+r}=\RR^{q}\oplus\RR^{r}, \qquad \RR^{n}=\RR^{r}\stackrel{\perp}{\oplus}\RR^{n-r}.
$$
one gets that $\lambda\in\hca_{q+r,n-r}$.  Indeed, both (h1) and (h3) follow easily from the fact that $\Ad_\lambda(\overline{K_\lambda})\subset\Or(n)$, (h2) holds by construction and if (h4) fails, then we can fix it as above and in any case to get $\lambda\in\hca_{q',n-r}$ for some $q'<q+r$.

As $(G_\lambda/K_\lambda,g_\lambda)$ has dimension $n-r<n$, we can just forget about any type of convergence we had studied in this paper as a candidate for
$$
(G_{\mu_k}/K_{\mu_k},g_{\mu_k})\to (G_\lambda/K_\lambda,g_\lambda).
$$
A natural guess is that {\it Gromov-Hausdorff} topology should be involved in some way (cf. e.g. \cite[Chapter 3]{Grm}, \cite[10.1]{Ptr}).  More precisely, we expect pointed Gromov-Hausdorff subconvergence to a homogeneous manifold locally isometric to $(G_\lambda/K_\lambda,g_\lambda)$, and thus we would be in the presence of what is called {\it collapsing with bounded curvature} in the literature (actually with bounded geometry).

\begin{example}\label{coll}
Consider $\mu_k:=\mu_{p_k,1,1,-1,0,1,0,1}\in\hca_{1,5}$ as in Example \ref{ex1-5}, where $p_k\in\QQ$ and $p_k\to\sqrt{2}$ as $k\to\infty$.  Thus $\mu_k$ is a sequence of homogeneous metrics on $S^3\times S^2$ which are pairwise non-equivariantly diffeomorphic and $\mu_k\to\lambda:=\mu_{\sqrt{2},1,1,-1,0,1,0,1}\notin\hca_{1,5}$.  However, if we consider the decomposition
$$
\RR^6=\RR^2\oplus\RR^4 = \la e_0,e_1\ra\oplus\la e_2,...,e_5\ra,
$$
then it is easy to check that $\lambda\in\hca_{2,4}$ and is a product of round metrics on $S^2\times S^2$.  The Ricci eigenvalues of $\mu_k$ are $\{ 1,p_k-\unm,p_k-\unm,\unm,\unm\}$.  In the light of the above speculation, $\mu_k\to\lambda$ would represent a collapsing with bounded geometry from $S^3\times S^2$ to $S^2\times S^2$.
\end{example}

Collapsing of homogeneous manifolds from the algebraic point of view used in this paper will be the object of further study.

\end{document}